\newcommand{\ie}{{\it i.e. }}
\newcommand{\cf}{{\it cf. }}
\newcommand{\eg}{{\it e.g. }}
\newcommand{\resp}{{\it resp. }}
\newcommand{\un}{\mathbb{1}}
\newcommand{\A}{\mathbf{A}}
\newcommand{\C}{\mathbb{C}}
\renewcommand{\L}{\mathbb{L}}
\newcommand{\N}{\mathbb{N}}
\renewcommand{\P}{\mathbf{P}}
\newcommand{\Q}{\mathbb{Q}}
\newcommand{\Z}{\mathbb{Z}}
\newcommand{\sA}{\mathcal{A}}
\newcommand{\sB}{\mathcal{B}}
\newcommand{\sC}{\mathcal{C}}
\newcommand{\sI}{\mathcal{I}}
\newcommand{\sK}{\mathcal{K}}
\newcommand{\sM}{\mathcal{M}}
\newcommand{\sS}{\mathcal{S}}
\newcommand{\sW}{\mathcal{W}}
\newcommand{\uH}{\underline{H}}
\newcommand{\Spec}{\operatorname{Spec}}
\newcommand{\Tr}{\operatorname{Tr}}
\newcommand{\Ker}{\operatorname{Ker}}
\newcommand{\add}{{\operatorname{add}}}
\newcommand{\Hom}{\operatorname{Hom}}
\newcommand{\End}{\operatorname{End}}
\newcommand{\Alb}{\operatorname{Alb}}
\newcommand{\Ex}{{\operatorname{\bf Ex}}}
\newcommand{\car}{\operatorname{char}}
\newcommand{\rat}{{\operatorname{rat}}}
\newcommand{\hun}{{\operatorname{hun}}}
\newcommand{\hum}{{\operatorname{hum}}}
\newcommand{\num}{{\operatorname{num}}}
\newcommand{\new}{{\operatorname{new}}}
\newcommand{\naf}{{\operatorname{naf}}}
\newcommand{\rig}{{\operatorname{rig}}}
\newcommand{\tnil}{{\operatorname{tnil}}}
\newcommand{\Corr}{\operatorname{Corr}}
\newcommand{\ab}{{\operatorname{ab}}}
\newcommand{\weil}{\sW}
\newcommand{\eff}{{\text{\rm eff}}}
\newcommand{\by}{\xrightarrow}
\newcommand{\iso}{\by{\sim}}
\renewcommand{\lim}{\varprojlim}
\renewcommand{\phi}{\varphi}
\renewcommand{\epsilon}{\varepsilon}
\newcounter{spec}
\newenvironment{thlist}{\begin{list}{\rm{(\roman{spec})}}%
{\usecounter{spec}\labelwidth=20pt\itemindent=0pt\labelsep=10pt}}%
{\end{list}}%
\numberwithin{equation}{section}
\newtheorem{thm}{Theorem}[section]
\newtheorem{prop}[thm]{Proposition}
\newtheorem{cor}[thm]{Corollary}
\newtheorem{conj}[thm]{Hypothesis}
\theoremstyle{definition}
\newtheorem{nota}[thm]{Notation}
\newtheorem{rk}[thm]{Remark}
\newtheorem{ex}[thm]{Example}
\begin{document}

\title{motives}
\author{Luca Barbieri-Viale}

\address{Dipartimento di Matematica ``F. Enriques'', Universit{\`a} degli Studi di Milano\\ Via C. Saldini, 50\\ I-20133 Milano\\ Italy}
\email{luca.barbieri-viale@unimi.it}
\keywords{Algebraic Geometry, Algebraic Cycles, Motives.}
\subjclass [2000]{18F99, 14F99, 19E15, 14F42}
\begin{abstract} 
\begin{sloppypar} 
Making a survey of recent constructions of universal cohomologies we suggest a new framework for a theory of motives in algebraic geometry.
\end{sloppypar}
\end{abstract}
\maketitle

\hfill {\small \it Dedicated to Jaap Murre}\\

\section*{Introduction}
The aim of this note is to relate the existing universal cohomology with the conjectural ``theory of motives''. Actually, the universal problems arising from the classification of cohomology theories do have canonical solutions, providing universal cohomologies with values in abelian categories: see \cite{BV} for the general setting, \cite{BVT} for the universal ordinary homology of topological spaces and \cite{BVKW} for the universal Weil cohomology. In particular, K\"unneth formulas and tensor structures are granted by universal constructions, see \cite{BVHP}, \cite{BVP} and \cite{BVK}.  We certainly should investigate more deeply into these universal theories but the general philosophical question is now the following: should we expect some key properties to be satisfied by universal cohomology theories in such a way that we can finally talk of them as a ``theory of motives''? 

For example, ``pure motives'' are usually constructed out of algebraic cycles and provide a  universal (generalised) Weil cohomology after Grothendieck standard conjectures; see Murre's lectures \cite{JPL} for a comprehensive presentation of classical Chow and Grothendieck motives. Notably, the construction of the universal (generalised) Weil cohomology is possible and independent of the standard conjectures, as \cite{BVKW} is showing. Therefore,  we may argue the other way around: we can look at this latter universal theory as the natural candidate out of which we may obtain a ``theory of pure motives''. Is this weaker approach reasonable? What we expect from a ``theory of pure motives''?

Note that motivic homotopy theory also appears to be a way to get around the standard conjectures if the wish is a triangulated (or dg or $\infty$) category as originally proposed by Voevodsky \cite{V}, Levine \cite{L} and Hanamura \cite{HA}. In Quillen's abstract homotopy setting of model categories Dugger's universal homotopy theories \cite{DU} provide the general context. However, similarly, we have a universal theory but the lack of conservativity of realisations is a missing wish, the most basic one, according to Ayoub \cite{AC}. 

Eventually, Voevodsky \cite[Def.\,4.3]{VN} provided a definition of what he called a ``theory of mixed motives''  but its existence would again imply the standard conjectures (see also Beilinson \cite{BE}). Furthermore, one should also believe that Voevodsky's nilpotence conjecture \cite[Conj.\,4.2]{VN} is true, because it also follows from his theory.\footnote{Recall that the nilpotence conjecture says that a cycle on a smooth projective variety is numerically equivalent to zero if and only if it is smash nilpotent.} 

Especially, Nori (see \cite{HMN} and \cite{BVHP}) and Andr\'e (see \cite{A}) provided an unconditional theory in characteristic zero. However, these theories are only universal for cohomology theories which are comparable with singular cohomology. 

In general, for a ``theory of motives'' relative to a chosen class of cohomology theories, a minimal key requirement is that of sharing a common universal enrichment: this is explained by Propositions \ref{prop1} - \ref{prop3} and Theorem \ref{thm4} below, after Theorems \ref{thm1} - \ref{thm2} and \ref{thm3} providing the universal theories (see also \cite[Prop.\,3.2.8]{BV} and \cite[Thm.\,6.1.7 - 8.4.5]{BVKW}). When this happens for a class of (generalised) Weil cohomologies we may say that a ``theory of pure motives'' exists for this class. The link with the standard conjectures is the content of Proposition \ref{prop4}, Corollary \ref{cor1} and Theorem \ref{thm5}. The nilpotence conjecture is independent of specific cohomology theories and implies all standard conjectures for any (generalised) Weil cohomology satisfying strong and weak Lefschetz as we here explain with Corollary \ref{cor2} (see also \cite[Cor.\,8.6.6]{BVKW}). 
All these considerations are leading to the Hypothesis \ref{SH} along with its direct consequences in Proposition \ref{prop5}: under this hypothesis, the universal Weil cohomology provides a Tannakian category such that any classical Weil cohomology yields a fiber functor in such a way that the conjectural picture for ``pure motives'' (as reported by Serre \cite[\S 3]{SE} or Deligne \cite[\S 1.1]{DE}) is verified. See the final Remark \ref{rk1} for a detailed explanation. Similarly, a ``theory of mixed motives'' via universal theories shall be properly treated elsewhere. 
 
In conclusion, from the existence of a universal cohomology we gain perspective that will help us see a <<new theory>> -- the ``theory of motives'' whether mixed or not -- <<that would finally explain the (similar) behaviour of all the different cohomology theories>> paraphrasing Grothendieck words, as reported by Jaap Murre.\footnote{See the footnote 1 in the introduction of \cite{JPL}}

\section{Exposition of the main theme}

Fix a category $\sC$ of  spaces or varieties.  To set a ``theory of motives'' on $\sC$ we need to select a class $\sS$ of interesting invariants.  Originally, for  the category $\sC$ of smooth  projective  algebraic varieties over a field $k$,  Grothendieck considered $\sS$ to be the class given by $\ell$-adic cohomologies in any characteristic with $\car k\neq \ell$, Betti and de Rham cohomologies in zero characteristic, and crystalline cohomology in positive characteristics (\eg see \cite[Ex.\,1.2.14]{JPL} or \cite[Def.\,4.3.2]{BVKW}):  call this latter the \emph{Grothendieck class} for short. By the way, this class gives rise to a class of relative cohomologies (in the sense of \cite[Def.\,3.2.1]{BV}). 

\subsection*{Universal cohomology} In general, for any  class $\sS$, we may assume that if $H\in \sS$ then for $X\in \sC$ and $i\in \N$ we have the cohomology objects $H^i(X)$ belonging to a fixed abelian category $\sA$, \eg finite dimensional $K$-vector spaces for $K$ a characteristic zero field of coefficients. Also, as $X\in \sC$ varies, these cohomology objects give rise to  a $\N$-indexed family $\{H^i\}_{i\in \N}$ of contravariant functors $H^i:\sC\to \sA$. 

Say that $H$ is a \emph{cohomology} with values in $\sA$ and note that for any (exact) functor between abelian categories $G: \sA \to \sB$ the composition with $H^i$ yields a cohomology with values in $\sB$: call this a \emph{push-forward} along $G$, denoted $G_*H$ for short. Moreover, such a family $\{H^i\}_{i\in \N}$ is equivalent to a single functor from $\sC\times \N$ to $\sA$ where $\N$ is considered as a discrete category.  Similarly, for $\sC^\square$ a suitable category of pairs, we have a basic notion of \emph{relative cohomology} on $\sC^\square$, see \cite[Def.\,3.2.1]{BV} for a precise formulation. 

Independently of a choice of $\sS$, as $\sA$ varies in $\Ex$, \ie  the 2-category of abelian categories and exact functors, we can see that the induced 2-functor of cohomologies with values in $\sA$ is already 2-representable, by the free abelian category on $\sC\times \N$, and similarly for the case of relative cohomologies. We obtain: 
\begin{thm}\label{thm1} For a category $\sC$ and a commutative ring $R$ the universal cohomology  $U$ exists, taking values in an abelian $R$-linear category $\sA(\sC)$. Moreover, for $\sC^\square$
the universal relative cohomology $U_\partial$ exists, taking values in the abelian $R$-linear  category $\sA_\partial (\sC)$. 
\end{thm}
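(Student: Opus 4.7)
The clean way to approach this is to take seriously the remark just made in the text: a cohomology $H$ with values in $\sA$ is the same datum as a single functor $\sC\times \N \to \sA$, where $\N$ is viewed as a discrete category. So the 2-functor
\[
\Ex \to \Cat,\qquad \sA\mapsto \{\text{cohomologies } \sC\to \sA\}
\]
becomes simply $\sA\mapsto \Hom(\sC\times \N,\sA)$, the category of ($R$-linear, say after linearising) functors. The plan is thus to reduce Theorem \ref{thm1} to the 2-representability of the assignment $\sA\mapsto \Hom(\sD,\sA)$ for an arbitrary small category $\sD$, i.e.\ to the existence of the free abelian $R$-linear category $\sA(\sD)$ on $\sD$.

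For that free abelian category I would appeal to (or reproduce) Freyd's construction: form first the free $R$-linear additive category $R[\sD]$ with the same objects as $\sD$ and morphisms given by finite formal $R$-linear combinations, then embed it into its category of finitely presented (contravariant) $R$-linear functors $\Mod R[\sD]^{\op}\to \sAb$, or equivalently perform the standard pair-and-relation construction that freely adjoins kernels and cokernels. The outcome is an abelian $R$-linear category $\sA(\sD)$ together with an $R$-linear functor $\sD\to \sA(\sD)$ whose universal property is exactly that any $R$-linear functor $\sD\to \sA$ into an abelian $R$-linear category factors, uniquely up to unique natural isomorphism, through an exact $R$-linear functor $\sA(\sD)\to \sA$. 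Applied to $\sD=\sC\times \N$ this yields $U$ and $\sA(\sC)$ as required in the first half of the statement.

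For the relative version, I would upgrade the diagram $\sC\times \N$ to a diagram scheme $\sD_\partial$ encoding the objects $X$ of $\sC$, the pairs $(X,Y)\in \sC^\square$, and the connecting morphisms arising from excision and the long exact sequence of a pair (indexed by $i\in \N$), together with the relations expressing that the Puppe-type compositions vanish. Running the same free construction produces a candidate abelian $R$-linear category $\tilde{\sA}_\partial(\sC)$. One must then quotient (or localise) by the Serre subcategory generated by the subquotients that prevent the relevant three-term sequences from being short exact; the universal property guarantees that this quotient $\sA_\partial(\sC)$ still receives any relative cohomology uniquely through an exact functor, giving $U_\partial$.

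The main technical obstacle is precisely this last step: one needs the relations imposed on the free abelian category to be compatible with exactness, so that the resulting quotient is still abelian and $R$-linear and still enjoys a universal property in $\Ex$ rather than in a larger 2-category of categories with merely right-exact functors. This is essentially the content of \cite[Thm.\,3.2.4]{BV}, and the key point is that the relations one has to impose are generated by kernel/cokernel-type data, which interact correctly with the exact functors in $\Ex$. Once this is checked, Corollaries 3.1.5 and 3.2.7 of \loccit give the stated $R$-linear refinement for any chosen commutative ring $R$.
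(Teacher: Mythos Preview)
Your proposal is correct and follows essentially the same approach the paper indicates: the paper does not spell out a proof here but points to \cite[Thm.\,3.1.2, Cor.\,3.1.5, Rk.\,3.1.4, Thm.\,3.2.4 \& Cor.\,3.2.7]{BV} and explicitly says the 2-functor is 2-represented by the free abelian category on $\sC\times\N$, which is exactly the Freyd-type construction you sketch; your treatment of the relative case via an enlarged diagram scheme with adjoined boundaries and a subsequent Serre quotient to force the long exact sequences is likewise the mechanism behind \cite[Thm.\,3.2.4]{BV} (and is recycled in the proof of Theorem~\ref{thm2} here). One small wording point: the reference imposes the long exact sequence of a \emph{triple} rather than of a pair, but this is a cosmetic reformulation and does not affect your argument.
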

See \cite[Thm.\,3.1.2, Cor.\,3.1.5, Rk.\,3.1.4, Thm.\,3.2.4 \& Cor.\,3.2.7]{BV} for details and notation. Therefore, for $H\in \sS$ with values in $\sA$ we get (uniquely up to isomorphisms) an exact functor 
  $$F_{H}: \sA(\sC)\to \sA$$ such that $H$ is the push-forward of $U$ along $F_{H}$; here $U$ 
 is given by $\{U^i\}_{i\in \N}$ where $U^i: \sC\to \sA(\sC)$ and we have that $H^i(X) \cong F_{H}(U^i(X))$ for each $X \in \sC$.  
 
Note that we have a canonical exact functor $\sA (\sC)\to \sA_\partial (\sC)$ which is not faithful, in general, but whose essential image is a generating subcategory (see  \cite[Thm.\,3.3.1]{BV}). If $H$ is a relative cohomology the functor $F_H$, lifts to an exact functor from $\sA_\partial (\sC)$ to $\sA$  which we denote $F_H^\partial$ , as depicted 
$$\xymatrix{
\sA (\sC)\ar[d]\ar@/^1.5pc/[dr]^-{F_H}&\\ 
\sA_\partial  (\sC)\ar@{.>}[r]^-{F_H^\partial}& \sA }$$
and we have $(F_H^\partial)_*(U_\partial)=H$. 
 
\subsection*{Equivalent cohomologies} For $H\in\sS$, consider the Serre quotient $$\sA(H):= \sA(\sC)/\Ker F_H$$ and, for $H$ relative, the Serre quotient $$\sA_\partial (H):=\sA_\partial(\sC)/\Ker F_H^\partial$$   (\cf \cite[Prop.\,3.2.8]{BV} and  \cite[Thm.\,6.1.7]{BVKW}). Clearly, $\sA(H)$ can be trivial, \eg if $H$ is the trivial theory. 

Say that $H, H'\in \sS$ with values in $\sA$ and $\sA'$ respectively are \emph{equivalent} if there is an equivalence $\sA(H)\cong \sA(H')$, \ie $\Ker F_H = \Ker F_{H'}$. Similarly, in the relative case, requiring $\sA_\partial(H)\cong \sA_\partial(H')$, and we then say that are \emph{$\partial$-equivalent}.

Moreover, say that $H'$ is a \emph{realisation} of $H$ if there exists a faithful\footnote{We can similarly formulate ``appropriate realisations'' in other contexts, rather than the abelian one, requiring conservativity instead of faithfullness, as in \cite{AC}.}
 exact functor $G: \sA \to \sA'$ such that $H'=G_*H$ is the push-forward of $H$ along $G$. In this case, conversely, we also say that $H$ is an \emph{enrichment} of $H'$ (\cf \cite[Def.\,6.1.4]{BVKW}).

The push-forward $U_H$  of $U$  along the projection to $\sA(H)$ yields $H$ as a realisation of $U_H$. Similarly, in the relative case, we get $U_{\partial,H}$ from $U_\partial$, with values in  $\sA_\partial (H)$.  The universal property of Serre quotients implies that $U_H$ is universal with respect to this property, which is Nori's universal property in the case of $U_{\partial,H}$  (\cf \cite[Thm.\,7.1.13]{HMN}).
Say that $U_H$ with values in  $\sA (H)$ is the \emph{initial or universal enrichment} of $H$ and $U_{\partial,H}$ with values in  $\sA_\partial (H)$ is the  \emph{universal $\partial$-enrichment} of a relative $H$.

Say that two theories are \emph{comparable} (or  \emph{$\partial$-comparable}) if they have common realisations, translating comparison theorems in this context. A comparison between $H$ and $H'$ is given by the existence of realisations  $G_*H$ and $G'_*H'$ along (faithful) functors $G: \sA \to \sB$ and $G': \sA' \to \sB$, together with a comparison isomorphism $G_*H\cong G'_*H'$ in $\sB$. Actually, we easily see that $H$ and $H'$ are equivalent (or $\partial$-equivalent) whenever they are comparable (or $\partial$-comparable) (\cf \cite[Def.\,6.6.1 \& Prop.\,6.6.2]{BVKW}). 

\subsection*{Effective motives} Now consider
$$\sI_\sS:= \bigcap_{H\in \sS}\Ker F_H  \ \
\text{and}\ \ \sA_\sS(\sC):= \sA(\sC)/\sI_\sS$$
so that we get an induced $U_{\sS}$, push-forward of $U$ along the projection to $\sA_\sS(\sC)$;  for $H\in \sS$, each $U_H$ is the push-forward of $U_{\sS}$ along the further quotient $\sA_\sS(\sC)\to \sA(H)$. There is a  functor $$r_H: \sA_\sS(\sC)\to \sA$$ refining $F_H$, so that $H$ is the push-forward of $U_\sS$  along $r_H$. Similarly define $\sI_{\partial,\sS}$ in the relative case, and $\sA_{\partial, \sS}(\sC)$ as $\sA_\partial (\sC)/\sI_{\partial,\sS}$, get $\sA_{\partial, \sS}(\sC)\to \sA_\partial (H)$, $U_{\partial, \sS}$ and $r_{\partial, H}$ as well. 
 We can easily express the similar behaviour the different cohomology theories in $\sS$ as follows:
\begin{prop} \label{prop1} The following are equivalent:
\begin{itemize}
\item all $H\in \sS$ are equivalent (\resp $\partial$-equivalent in the relative case),
\item all $H\in \sS$ are realisations of $U_\sS$ along $r_H$ (\resp of $U_{\partial, \sS}$ along $r_{\partial, H}$),
\item if $H\in \sS$ then $\sA_\sS(\sC)\iso\sA(H)$ (\resp $\sA_{\partial, \sS}(\sC)\iso\sA_\partial(H)$) is an equivalence.
\end{itemize}
\end{prop}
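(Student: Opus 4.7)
The plan is to reduce all three conditions to the single statement $\Ker F_H=\sI_\sS$ for every $H\in \sS$. First I would unwind the definition of equivalence: $H$ and $H'$ are equivalent precisely when $\Ker F_H=\Ker F_{H'}$, and since by construction $\sI_\sS=\bigcap_{H\in \sS}\Ker F_H$, condition (i) amounts to each $\Ker F_H$ coinciding with this common intersection $\sI_\sS$.

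For (iii), both $\sA_\sS(\sC)$ and $\sA(H)$ are Serre quotients of $\sA(\sC)$, by the subcategories $\sI_\sS\subseteq \Ker F_H$ respectively. The universal property of Serre quotients gives a factorization $p_H=\pi_H\circ q$ of the projection $p_H:\sA(\sC)\twoheadrightarrow \sA(H)$ through $q:\sA(\sC)\twoheadrightarrow \sA_\sS(\sC)$, and a ``third isomorphism theorem'' identifies $\pi_H$ with the further Serre projection $\sA_\sS(\sC)\twoheadrightarrow \sA_\sS(\sC)/q(\Ker F_H)$. This $\pi_H$ is an equivalence if and only if $q(\Ker F_H)$ is the zero subcategory, equivalently $\Ker F_H\subseteq \sI_\sS$, equivalently (with the always-valid reverse inclusion) $\Ker F_H=\sI_\sS$. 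So (iii) is equivalent to (i).

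Finally for (ii), I would use the canonical factorization $F_H=i_H\circ p_H$: the induced $i_H:\sA(H)\to \sA$ is exact and \emph{faithful}, since any $X\in \sA(H)$ sent to zero in $\sA$ must come from $\Ker F_H$ and is therefore already zero in the quotient. By uniqueness of factorizations through $q$, we have $r_H=i_H\circ \pi_H$, so $r_H$ is faithful iff $\pi_H$ is. A Serre projection is faithful iff it is an equivalence — a nonzero object of the quotiented subcategory would yield a nonzero identity morphism sent to $0$ — so this matches the criterion from the previous paragraph, giving (ii) $\Leftrightarrow$ (iii). The relative case runs verbatim with $F_H^\partial$, $\sI_{\partial,\sS}$, and $\sA_\partial(H)$ replacing the absolute data. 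I do not anticipate any real obstacle: the proposition is a translation of standard facts about Serre quotients (universal property and third isomorphism theorem) into the language of universal cohomologies, the only non-trivial input being the faithful factorization of $F_H$ already used in the definition of $\sA(H)$.
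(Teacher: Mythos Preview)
Your proof is correct. The paper does not actually provide a proof of this proposition --- it is stated immediately after the sentence ``We can easily express the similar behaviour the different cohomology theories in $\sS$ as follows'' and left without argument --- so your reduction of all three conditions to $\Ker F_H=\sI_\sS$, via the third isomorphism theorem for Serre quotients and the faithfulness of the induced functor out of a Serre quotient by the exact kernel, is precisely the intended unwinding and supplies the details the paper omits. One minor point of presentation: when you argue that $i_H$ is faithful by checking it on objects, you are implicitly using that an exact functor between abelian categories is faithful iff it reflects zero objects; this is standard but worth saying explicitly, since faithfulness is a condition on morphisms.
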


Say that the cohomology theory $U_\sS$ (or the relative cohomology $U_{\partial, \sS}$) with values in $\sA_\sS(\sC)$ (or $\sA_{\partial, \sS}(\sC)$) is a \emph{theory of effective motives} for $\sC$ (or $\sC^\square$) and $\sS$ if the properties of Proposition \ref{prop1} are satisfied.
\begin{ex} \label{ex1}
If  $\sC$ is the category of algebraic varieties over a field $k$, $\sC^\square$ being given by pairs $(X, Y)$ where $Y$ is a closed subvariety of $X$ and $\sS$ is the Grothendieck class as above, a theory of motives exists for $k\hookrightarrow\C$ a fixed embedding in the complex numbers: this is the (effective) theory of Nori motives (see \cite[Def.\,9.1.3]{HMN} and \cite[Prop.\,3.2.8]{BV}). This is however cheating a bit: all cohomologies in the Grothendieck class are $\partial$-comparable since we are in zero characteristic! It is an open problem if such a theory exists in positive characteristics; another general problem is to give a geometric presentation of effective motives, if any.
\end{ex}

\subsection*{More axioms} Moreover, given any such a class $\sS$ one usually classifies its elements by the properties they share. This approach shall be driving us to an axiomatic notion of cohomology theory on $\sC$ which has then to be related back with the collection of cohomologies in $\sS$. For a chosen set $\dag$  of axioms in which some canonical maps between cohomology objects are invertible plus compatibilities (provided by the commutativity of some diagrams) call \emph{$\dag$-decoration} and/or \emph{$\dag$-cohomology theory} on $\sC$ the resulting theory satisfying these axioms. 
 
Let $\sI^\dag\subset \sA (\sC)$ be the smallest Serre subcategory containing kernels and cokernels of the set of morphisms that we want to make invertible (including the equalisers for compatibilities) for a $\dag$-decoration. Then
$$\sA^\dag (\sC):= \sA(\sC)/\sI^\dag$$
is just the free abelian category generated by $\sC\times \N$ modulo the relations given by the axioms $\dag$. We get the \emph{universal $\dag$-cohomology} $U^\dag$ as the push-forward of $U$ along the projection. Similarly, if $\sI^\dag_\partial\subset \sA_\partial (\sC)$ in the relative case, $\sA^\dag_\partial (\sC):= \sA_\partial^\dag (\sC)/\sI^\dag_\partial$.

\begin{prop} \label{prop2}
If $\sS$ is the class of $\dag$-cohomologies on $\sC$, for a set $\dag$ of axioms,  then 
$\sI^\dag=\sI_\sS$ therefore $\sA^\dag (\sC) = \sA_\sS (\sC)$. Similarly, $\sI^\dag_\partial =\sI_{\partial, \sS}$ and $\sA^\dag_\partial (\sC)=\sA_{\partial, \sS} (\sC)$
\end{prop}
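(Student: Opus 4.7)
The plan is to prove the two inclusions $\sI^\dag\subseteq \sI_\sS$ and $\sI_\sS\subseteq \sI^\dag$; the equality of Serre quotients $\sA^\dag(\sC)=\sA_\sS(\sC)$ then follows automatically. The argument is formal, using only the universal property of $U$ given by Theorem \ref{thm1} together with the basic yoga of Serre quotients of abelian categories.

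For the inclusion $\sI^\dag\subseteq \sI_\sS$, fix any $H\in \sS$. Since $H$ satisfies the $\dag$-axioms, the exact functor $F_H:\sA(\sC)\to \sA$ sends each of the prescribed morphisms to an isomorphism in $\sA$, and it sends each equaliser encoding a compatibility to an isomorphism. Exactness of $F_H$ forces $\Ker F_H$ to be a Serre subcategory of $\sA(\sC)$, and it must contain the kernel and cokernel of every such morphism and equaliser. By the minimality defining $\sI^\dag$, we conclude $\sI^\dag\subseteq \Ker F_H$; intersecting over $H\in \sS$ yields $\sI^\dag\subseteq \sI_\sS$.

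For the reverse inclusion $\sI_\sS\subseteq \sI^\dag$, the key observation is that the universal $\dag$-cohomology $U^\dag$, with values in the abelian category $\sA^\dag(\sC)=\sA(\sC)/\sI^\dag$, is itself a member of $\sS$. Indeed, this is exactly what the construction of $\sI^\dag$ is designed to achieve: killing the kernels and cokernels of the prescribed morphisms makes them invertible in the Serre quotient, and killing the cokernels of the relevant equalisers makes the compatibility diagrams commute. Therefore $U^\dag\in \sS$, and by the universal property of $U$ in Theorem \ref{thm1} its classifying functor $F_{U^\dag}:\sA(\sC)\to \sA^\dag(\sC)$ is (uniquely up to isomorphism) the canonical projection, whose kernel is precisely $\sI^\dag$. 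Hence $\sI_\sS=\bigcap_{H\in \sS}\Ker F_H\subseteq \Ker F_{U^\dag}=\sI^\dag$.

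The relative statement is identical after replacing $\sA(\sC)$ with $\sA_\partial(\sC)$, $U$ with the universal relative cohomology $U_\partial$ from Theorem \ref{thm1}, and the $\dag$-axioms with their relative counterparts; the universal relative $\dag$-cohomology $U^\dag_\partial$ plays the role of $U^\dag$ in the second step. The only step requiring actual verification, and thus the main (but mild) obstacle, is checking that $U^\dag$ really lies in $\sS$; this is built into the definition of $\sI^\dag$ as soon as one is careful to include both kernels \emph{and} cokernels of all prescribed morphisms, together with the equalisers needed for the compatibilities.
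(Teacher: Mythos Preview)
Your proof is correct and follows essentially the same approach as the paper's own proof: both establish $\sI^\dag\subseteq \sI_\sS$ by noting that each $F_H$ with $H\in\sS$ kills the generators of $\sI^\dag$, and both obtain the reverse inclusion by observing that $U^\dag$ itself belongs to $\sS$ with $F_{U^\dag}$ the projection onto $\sA^\dag(\sC)$, so that $\sI_\sS\subseteq\Ker F_{U^\dag}=\sI^\dag$. Your version simply spells out in more detail what the paper compresses into two lines.
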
 
\begin{proof} In fact, clearly $\sI^\dag\subseteq \sI_\sS$ by the universality of $\sA^\dag_\partial (\sC)$  but $\sI^\dag = \Ker F_{U^\dag}$, since $F_{U^\dag}$ is the projection from $\sA(\sC)$ to $\sA^\dag (\sC)$, whence also
$\sI_\sS\subseteq \sI^\dag$. Similarly, for relative cohomologies.
\end{proof}

For example, all $H\in \sS$ can be (finitely) additive, \ie
$H^i(X\coprod Y)\cong H^i(X)\oplus H^i(Y)$, if coproducts exist $X\coprod Y\in \sC$. To impose or ask for additivity we just impose or ask that the canonical arrow between $H^i(X\coprod Y)$ and $H^i(X)\oplus H^i(Y)$ is invertible (\cf \cite[Prop.\,3.3.9]{BV}). Then $\sI^\add$ shall be the Serre subcategory generated by kernels and cokernels of these canonical arrows. These $H\in \sS$ can certainly be more $\dag$-decorated, satisfying further properties given by saying that some other canonical morphisms are invertible or some compatibilities. However, if we take $\dag =\add$ and set $\sS$ to be exactly the class of additive cohomologies we get $\sA^\add (\sC) = \sA_\sS (\sC)$ as an instance of Proposition \ref{prop2}. 

For a  point axiom (see \cite{BVT}), a key axiom indeed, we require that $H^i(1)=0$ for $i\neq 0$ and $H^0(1)\in \sA$ shall be called the \emph{coefficient object} of the cohomology theory. The resulting $\sA^{\rm point} (\sC)$, the theory with the point axiom, is obtained by taking $\sI^{\rm point}$ to be the Serre subcategory generated by all $H^i(1)$ for $i\neq 0$.

As the reader can easily guess, we can also ask for other $\dag$-decorations, such as homotopy invariance and excision, see \cite{BVT}, where we got the universal ordinary (or Eilenberg-Steenrod) relative (co)homology for topological spaces or smooth schemes. 

\begin{ex}\label{ex2}
In the topological case of $\sC$ being the category of CW-complexes, the category $\sC^\square$ given by usual pairs, the ring $R=\Q$ in Theorem \ref{thm1}, $\dag$= Ord (ordinary additive homology), then $\sA^{\rm Ord}_\partial (\sC)$ is the category of $\Q$-vector spaces and the universal theory is singular (co)homology,  see \cite[Thm.\,3.2.2]{BVT}. A theory of effective motives exists in this case. However, for a general coherent ring $R$, singular (co)homology is a push-forward of the universal theory, see \cite[Cor.\,3.2.3]{BVT}.
\end{ex}

\section{Development and monodial interlude} 

On $\sC$ we usually have a tensor structure\footnote{Tensor structure is an abbreviation for unital symmetric monoidal structure.} $(\sC, \times, 1)$ given by  the product $X\times Y\in \sC$  of varieties $X, Y\in \sC$ and where $1$ is the final object of $\sC$. Similarly, for pairs we get $(\sC^\square, \times, (1,\emptyset))$ an induced tensor structure.
Cohomology theories $H\in\sS$, \eg for the Grothendieck class, actually take values in abelian tensor $\Q$-linear categories $(\sA, \otimes, \un)$ and are endowed with an external product
\[\kappa^{i,j}_{X,Y}:H^i(X)\otimes  H^j(Y)\to H^{i+j}(X\times Y)\]
which satisfies some compatibilties, such as graded commutativity with respect to the symmetry and associativity (\eg see \cite[Def.\,3.2.3]{BVKW} and \cite[\S 2.1]{BVP}). Moreover, $H$ satisfies K\"unneth formula, \ie the corresponding morphism
\[\kappa_{X,Y} =\sum \kappa^{i,j}_{X,Y}: \bigoplus_{i+j=k} H^i(X)\otimes  H^j(Y)\iso H^{k}(X\times Y)\]
is an isomorphism. They are also satisfying the point axiom  $H^i(1)= 0$ for $i\neq 0$ and $\upsilon : H^0(1)\cong \un$ which is strong unitality (in \cite[Def.\,3.2.1]{BVKW}). 

\subsection*{K\"unneth cohomologies} Call this extra structure $(H, \kappa, \upsilon)$ along with the named properties a \emph{K\"unneth cohomology} on $\sC$, for short. It is well known that these properties just say that the corresponding graded contravariant functor $H^*: \sC\to \sA^\N$ is a strong tensor functor (\eg see \cite[Rk.\,3.2.2 \& 3.2.4]{BVKW}). Adopt the definitions of \cite[\S 2.1]{BVP} and the resulting relative K\"unneth formula (\cf \cite[Thm.\,2.4.1]{HMN} and the corresponding one displayed in \cite[\S 2.3]{BVP}) to define a \emph{relative K\"unneth cohomology} on $\sC^\square$.  

For (relative) K\"unneth cohomologies, we then can make up a 2-functor  as $(\sA, \otimes, \un)$ is varying in $\Ex^\otimes$, \ie the 2-category of abelian tensor $\Q$-linear categories with exact tensor\footnote{We consider $(\sA, \otimes, \un)$ unital symmetric monoidal for which $\otimes$ is exact but we do not assume that the $\Q$-algebra of endomorphisms of the unit $\un$ is a field.} and exact strong tensor $\Q$-linear functors, by pushing forward along such functors.  Merging the techniques from \cite{BVHP}, \cite{BVP}, \cite{BVKW} and \cite{BVK}  we get that this 2-functor is 2-representable.

\begin{thm}\label{thm2}  
For $(\sC, \times, 1)$ there exists a universal K\"unneth cohomology $(U_\kappa, \kappa, \upsilon)$ with values in 
$(\sA_\kappa(\sC), \otimes_\kappa, \un_\kappa)$ abelian tensor $\Q$-linear category such that $\otimes_\kappa$ is exact. Moreover, for $(\sC^\square, \times, (1,\emptyset))$we get a universal relative K\"unneth cohomology theory $(U_{\partial\kappa}, \kappa, \upsilon)$  with values in an abelian tensor exact $\Q$-linear category $(\sA_{\partial \kappa} (\sC),\otimes_{\partial \kappa}, \un_{\partial \kappa} )$.  
\end{thm}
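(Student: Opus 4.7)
The plan is to realise $(\sA_\kappa(\sC),\otimes_\kappa,\un_\kappa)$ as a tensor Serre quotient of the universal abelian $\Q$-linear category $\sA(\sC)$ from Theorem \ref{thm1} specialised to $R=\Q$, so that the construction parallels Proposition \ref{prop2} but carried out inside the enriched 2-category $\Ex^\otimes$ rather than $\Ex$.

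First, I would upgrade $\sA(\sC)$ to an abelian tensor $\Q$-linear category. The product $\times\colon\sC\times\sC\to\sC$ together with addition $+\colon\N\times\N\to\N$ yields a bilinear assignment on generators $(X,i,Y,j)\mapsto U^{i+j}(X\times Y)$; by the freeness of $\sA(\sC)$ (\cite[Thm.\,3.1.2]{BV}) this extends uniquely to a biexact $\Q$-linear bifunctor $\otimes\colon\sA(\sC)\times\sA(\sC)\to\sA(\sC)$, essentially a Day convolution along $\times$. Working with $R=\Q$ is what allows the Koszul sign rule to install a graded symmetry. The final object $1\in\sC$ provides a candidate unit $U^0(1)$, and the bilinearity of $\Hom$-sets in the free abelian category gives the associativity and symmetry constraints up to specified canonical arrows, exactly as in \cite[§2.1]{BVP}.

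Second, I would impose the Künneth and strong unitality axioms as a single tensor Serre quotient. Let $\sI^\kappa\subset\sA(\sC)$ be the smallest thick $\otimes$-ideal containing the kernels and cokernels of all canonical Künneth arrows $\kappa^{i,j}_{X,Y}\colon U^i(X)\otimes U^j(Y)\to U^{i+j}(X\times Y)$, the objects $U^i(1)$ for $i\neq 0$, the kernel and cokernel of a chosen unit arrow $\upsilon\colon U^0(1)\to\un_\kappa$, and the equalisers encoding the associativity and graded symmetry compatibilities (compare the $\dag$-decoration formalism after Proposition \ref{prop2}). Setting $\sA_\kappa(\sC):=\sA(\sC)/\sI^\kappa$ and letting $U_\kappa$ be the push-forward of $U$ along the projection, the fact that $\sI^\kappa$ is a tensor ideal by construction means the quotient inherits an exact tensor $\Q$-linear structure $(\otimes_\kappa,\un_\kappa)$ in which $(U_\kappa,\kappa,\upsilon)$ is a genuine Künneth cohomology. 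For 2-representability, given any Künneth cohomology $(H,\kappa_H,\upsilon_H)$ with values in $(\sA,\otimes,\un)\in\Ex^\otimes$, Theorem \ref{thm1} yields an exact $\Q$-linear functor $F_H\colon\sA(\sC)\to\sA$ with $(F_H)_*U=H$; since the Künneth and unit arrows for $H$ are already isomorphisms, $F_H$ annihilates every generator of $\sI^\kappa$ and descends uniquely to an exact functor $\sA_\kappa(\sC)\to\sA$, whose strong tensoriality is automatic because on generators it identifies $F_H(U^i(X))\otimes F_H(U^j(Y))$ with $F_H(U^{i+j}(X\times Y))$ via $\kappa_H$. The relative case runs identically, starting from $\sA_\partial(\sC)$ and using the relative Künneth formula on pairs (\cite[Thm.\,2.4.1]{HMN}, \cite[§2.3]{BVP}) to generate a tensor Serre ideal $\sI^\kappa_\partial$ with quotient $\sA_{\partial\kappa}(\sC)$.

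The main obstacle is twofold. On the one hand, one has to verify that the free abelian category construction genuinely lifts to a \emph{free abelian tensor} category, i.e.\ that $\otimes$ on $\sA(\sC)$ is biexact and that $\sI^\kappa$ is a tensor Serre ideal so the quotient inherits an exact $\otimes_\kappa$; this is the content of the monoidal enhancements developed in \cite{BVP} and \cite{BVK}. On the other hand, the relative case is delicate because the boundary-aware external product involves signs and excision-type identifications that must be simultaneously encoded into $\sI^\kappa_\partial$. The "merging" of techniques from \cite{BVHP}, \cite{BVP}, \cite{BVKW} and \cite{BVK} referenced in the statement is precisely what packages all these compatibilities into one coherent tensor Serre quotient.
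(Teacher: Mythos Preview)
Your approach inverts the order of operations relative to the paper, and this is where the gap lies. The paper does \emph{not} start from the free abelian category $\sA(\sC)$ and then tensorify. It first builds a $\Q$-linear additive \emph{tensor} category $\sK(\sC)$ by applying Levine's construction $(\sC\times\N)^\kappa$ to freely adjoin an external product $\ell^{i,j}_{X,Y}:L(X,i)\otimes L(Y,j)\to L(X\times Y,i+j)$, passing to the additive completion, and then localising at the K\"unneth sums $\delta=\sum\ell^{i,j}$, at $\upsilon$, and at the point-axiom maps. Only afterwards does it abelianise via the 2-functor $T$ of \cite[Prop.\,5.4]{BVK}, which sends a tensor additive category to the universal abelian category with \emph{exact} tensor. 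The functor $T$ is engineered to carry the monoidal structure along; the plain free-abelian construction behind Theorem~\ref{thm1} is not, so there is no reason the exact functor $F_H:\sA(\sC)\to\sA$ produced there should be, or can be made, a tensor functor.

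Your proposed tensor on $\sA(\sC)$, with $U^i(X)\otimes U^j(Y):=U^{i+j}(X\times Y)$ on generators, has two concrete defects. First, the freeness of $\sA(\sC)$ classifies exact functors \emph{out} of it, not biexact bifunctors \emph{on} it; invoking Day convolution does not close this, since biexactness of such a convolution on the free abelian category is precisely the missing ingredient. Second, and decisively, your definition forces each individual $\kappa^{i,j}_{X,Y}$ to be an identity, whereas a K\"unneth cohomology only requires the \emph{sum} $\kappa_{X,Y}=\sum_{i+j=k}\kappa^{i,j}_{X,Y}$ to be invertible: in the correct universal category $U_\kappa^{k}(X\times Y)$ is the direct sum of the $U_\kappa^i(X)\otimes_\kappa U_\kappa^j(Y)$, not equal to any single one of them. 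With your convention the K\"unneth generators of $\sI^\kappa$ are all zero, and your claim that the induced $F_H$ is strongly tensorial ``via $\kappa_H$'' fails because $\kappa_H^{i,j}$ is in general only a split monomorphism, not an isomorphism. Levine's step in the paper is exactly what introduces a genuinely \emph{new} arrow $\ell^{i,j}$, distinct from any identification, so that the K\"unneth sum can later be inverted without collapsing the grading; the relative case then adjoins the boundaries $\partial^i$ by the same mechanism before passing to $T$ and taking successive Serre tensor quotients.
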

\begin{proof} We follow the pattern of the proof of \cite[Thm.\,5.2.1]{BVKW} but for the opposite category as we want a contravariant theory. We may assume that $\sC$ is $\Q$-linear and let $\sC \times \N$ be endowed with the induced tensor structure with the Koszul constraint (see \cite[Rk.\,3.2.4]{BVKW}). (If $\sC$ is not $\Q$-linear we can pass to the $\Q$-linear preadditive hull $\Q\sC$ or just use that $\sC \times \N$ is graded in the sense of \cite[Def.\,2.13]{BVHP}). We first use Levine's universal construction (see \cite[Prop.\,Part II I.2.4.4 (i)]{L})
$$ (L,\ell, \upsilon) : \sC\times \N\to (\sC\times \N)^\kappa$$ providing a tensor structure $((\sC\times \N)^\kappa, \otimes, \omega)$ together with a universal  external product 
\[\ell_{X,Y}^{i,j}: L(X,i)\otimes L(Y,j)\to L(X\times Y,i+j)\]
and $\omega\to L(1,0)$ for $X,Y\in \sC$, $i,j\in \N$. Let $(\sC\times \N)^{\kappa,\add}$ be the relative $\Q$-linear additive completion (see \cite[Prop.\,3.4.3]{BVKW}), in such a way that the functor $(\sC\times \N)^\kappa\to (\sC\times \N)^{\kappa,\add}$ is a strong tensor functor and its composition $L^\add$ with $L$ is $\Q$-linear. 
 We then get $\upsilon: \omega^\add\to L^\add(1,0)$ and
\[\delta= \sum\ell_{X,Y}^{i,j}: \bigoplus_{i+j=k} L^\add(X,i)\otimes L^\add(Y,j)\to L^\add(X\times Y,k)\]
 in $(\sC\times \N)^{\kappa,\add}$, for $i,j,k\in \N$. Let $\sK (\sC)$ be the ($\Q$-linear tensor) localisation of  $(\sC\times \N)^{\kappa,\add}$ making $\upsilon$, $\delta$ and $L^\add(1,i)\to 0$ for $i> 0$ invertible (see \cite[Prop.\,3.5.3]{BVKW}). Now we get a K\"unneth cohomology $K$ with values in $\sK (\sC)$ which is a $\Q$-linear additive tensor category. Let $\sA_\kappa (\sC) := T (\sK (\sC))$, where $T$ is the 2-functor providing the universal abelian tensor category with an exact tensor structure, see \cite[Prop.\,5.4]{BVK}. It is easy to check, step by step, that any K\"unneth cohomology factors (uniquely up to ismorphism) through $\sA_\kappa (\sC)$.
 
For relative K\"unneth cohomologies, we modify the previous proof as follows: instead of $(\sC\times \N)^\kappa$ we consider the tensor category $(\sC^\square\times \N)^{\partial\kappa}$ providing a universal external product 
\[{ }^\square\ell^{i,j}: L(X, Y,i)\otimes L(X',Y',j)\to L(X\times X',X\times Y'\cup Y\times X', i+j)\]
togheter with ${ }^\square\upsilon: \omega\to L(1,\emptyset,0)$ and the boundaries 
\[ \partial^i: L(Y, Z,i) \to L(X, Y,i+1)\]
for any $(X, Y), (X',Y'), (Y, Z)\in \sC^\square$ and $i,j\in \N$. In fact, following Levine's construction (see \cite[Part II I.2.1.2]{L}), it is always possible to adjoin such morphisms $\{\partial^i\}_{i\in \N}$ in such a way that now $(\sC^\square\times \N)^{\partial\kappa}$  is also universal with respect to this property, \ie for functors $H$ into $\sA$ together with a choice of such boundary morphisms $\partial^i_H:H(Y, Z,i) \to H(X, Y,i+1)$. 
Note that the functoriality of $L$ grants excision maps 
\[ \Delta^i : L(X, Z,i)\to L(Y, Y\cap Z,i)\]
for $(X, Y), (X, Z)\in \sC^\square$ such that $X=Y\cup Z$. Now let $(\sC^\square\times \N)^{\partial\kappa, \add}$ be the relative additive completion in such a way that we also obtain the relative version ${ }^\square\delta= \sum{ }^\square\ell^{i,j}$ as above, given by ${ }^\square\ell^{i,j}$. Now to impose all the needed properties, we can directly consider the universal abelian tensor category  $T ((\sC^\square\times \N)^{\partial\kappa, \add})$ and get $\sA_{\partial \kappa} (\sC)$ by successive quotients modulo Serre tensor ideals (using \cite[Prop.\,4.5]{BVK}), since when the induced $U_{\partial \kappa}: \sC^\square \times \N\to \sA_{\partial \kappa} (\sC)$ becomes a relative K\"unneth cohomology $U_{\partial \kappa}^i(X,Y):= U_{\partial \kappa}(X,Y,i)$. We first impose that any triple gives rise to a complex via $L$ and then set the long exact sequence of a triple as in the proof of \cite[Thm.\,3.2.4]{BV}; we can make ${ }^\square\delta, { }^\square\upsilon$ and $\Delta^i$ invertible, impose that $U_{\partial \kappa}^i(1,\emptyset )=0$ for $i\neq 0$ and the various compatibilities, \eg  
${ }^\square\ell^{i,j}$ and $\partial^i$ become compatible in the sense of the axioms in \cite[\S 2.1]{BVP}. 
The so obtained relative K\"unneth cohomology $U_{\partial \kappa}$ is universal by construction. In fact, any relative K\"unneth cohomology $(H, \kappa, \upsilon)$ on $\sC^\square$ with values in $\sA$ yields a uniquely determined strict tensor functor $F_H^{\partial\kappa}:(\sC^\square\times \N)^{\partial\kappa}\to \sA$ such that $F_H^{\partial\kappa}\mid_{\sC^\square\times \N}= H$,  $F_H^{\partial\kappa}({ }^\square\ell^{i,j})= \kappa^{i,j}$, $F_H^{\partial\kappa}({ }^\square\upsilon)=\upsilon$ and $F_H^{\partial\kappa}(\partial^i)=\partial_H^i$. This $F_H^{\partial\kappa}$ further lifts to an exact tensor functor on $T ((\sC^\square\times \N)^{\partial\kappa, \add})$ and then factors trhough a functor $F_H^{\partial \kappa}: \sA_{\partial \kappa} (\sC)\to \sA$, uniquely, by construction.
\end{proof}

Clearly, we get an exact strong tensor functor $\sA_\kappa(\sC) \to \sA_{\partial \kappa}  (\sC)$.

\subsection*{Effective motives with tensor product} Therefore, from Theorem \ref{thm2},  for any K\"unneth cohomology $H$ with values in $\sA$ (abelian and tensor exact) we get an exact strong tensor $\Q$-linear functor $F_{H}^\kappa: \sA_\kappa (\sC)\to \sA$, lifting to $\sA_{\partial \kappa}  (\sC)$ if $H$ is relative. Thus $\Ker F_{H}^\kappa\subseteq \sA_\kappa (\sC)$ and $\Ker F_{H}^{\partial \kappa}\subseteq \sA_{\partial\kappa } (\sC)$, if $H$ is relative, are  Serre tensor ideals and the quotients $$\sA_\kappa (H): = \sA_\kappa (\sC)/\Ker F_H^\kappa\ \ \text{and}\ \ \sA_{\partial \kappa} (H):= \sA_{\partial \kappa}  (\sC)/\Ker F_H^{\partial \kappa}$$ inherit an exact tensor structure (see \cite[Prop.\,4.5]{BVK}). The push-forward $U_{\kappa, H}$  in $\sA_\kappa (H)$ is the \emph{universal $\kappa$-enrichment} of a K\"unneth cohomology $H$ and $U_{\partial \kappa, H}$  in $\sA_{\partial \kappa} (H)$ is the \emph{universal $\partial\kappa$-enrichment} of $H$ relative.  Say that $H$ and $H'$ are \emph{$\kappa$-equivalent} if we have a tensor equivalence $\sA_\kappa (H)\cong \sA_\kappa (H')$. Call relative theories  \emph{$\partial\kappa$-equivalent} if we have a tensor quivalence $\sA_{\partial \kappa} (H)\cong \sA_{\partial \kappa} (H')$. 

Say that a \emph{theory of (effective) motives with tensor product} exists for $\sC$ (or $\sC^\square$) and $\sS$ if the following properties, listed in Proposition \ref{prop3} below, are satisfied for the tensor $\kappa$-variant $U_{\kappa,\sS}$ and $r_{\kappa,H}$ (or $\partial\kappa$-variant $U_{\partial\kappa,\sS}$ and $r_{\partial\kappa,H}$) of Proposition \ref{prop1} above.

\begin{prop} \label{prop3} The following are equivalent:
\begin{itemize}
\item all $H\in \sS$ are $\kappa$-equivalent (\resp $\partial\kappa$-equivalent in the relative case),
\item all $H\in \sS$ are realisations of $U_{\kappa,\sS}$ along $r_{\kappa, H}$ (\resp of $U_{\partial\kappa, \sS}$ along $r_{\partial\kappa, H}$),
\item if $H\in \sS$ then $\sA_{\kappa, \sS}(\sC)\iso\sA_\kappa(H)$ (\resp $\sA_{\partial \kappa, \sS}(\sC)\iso\sA_{\partial \kappa}(H)$) is an equivalence.
\end{itemize}
\end{prop}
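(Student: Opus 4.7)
The plan is to mimic the argument of Proposition \ref{prop1} in the tensor setting: the proof is a formal manipulation of Serre tensor quotients of $\sA_\kappa(\sC)$ (respectively $\sA_{\partial\kappa}(\sC)$), using that quotients by Serre tensor ideals inherit an exact $\Q$-linear tensor structure via \cite[Prop.\,4.5]{BVK}. I treat only the absolute case; the relative one is obtained by replacing $\kappa$ with $\partial\kappa$ throughout.

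First I would unwind the definitions so that the three conditions become uniform statements about Serre tensor ideals of $\sA_\kappa(\sC)$. Being $\kappa$-equivalent means $\sA_\kappa(H)\cong \sA_\kappa(H')$ as Serre quotients of $\sA_\kappa(\sC)$ compatibly with $U_{\kappa, H}$ and $U_{\kappa, H'}$, which amounts to the equality of Serre tensor ideals $\Ker F_H^\kappa = \Ker F_{H'}^\kappa$. Hence the first bullet is exactly the statement that all the $\Ker F_H^\kappa$, $H\in \sS$, coincide, in which case their common value is $\sI_{\kappa, \sS} = \bigcap_H \Ker F_H^\kappa$.

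Next I would prove that the first bullet is equivalent to the third. The canonical exact strong tensor functor $q_H \colon \sA_{\kappa, \sS}(\sC)\to \sA_\kappa(H)$ is, up to identification, the further Serre quotient functor associated with the inclusion $\sI_{\kappa, \sS}\subseteq \Ker F_H^\kappa$. Since Serre quotient functors are essentially surjective, $q_H$ is a tensor equivalence if and only if it is faithful, equivalently iff $\sI_{\kappa, \sS} = \Ker F_H^\kappa$. So the third bullet holding for every $H$ is the same as all the $\Ker F_H^\kappa$ agreeing with $\sI_{\kappa, \sS}$, which is the first. For the third iff the second, I use that by construction $r_{\kappa, H}$ factors as $\overline{F_H^\kappa}\circ q_H$, where $\overline{F_H^\kappa}\colon \sA_\kappa(H)\to \sA$ is the faithful exact strong tensor functor induced from $F_H^\kappa$ by passing to the quotient by its kernel. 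Hence $r_{\kappa, H}$ is faithful iff $q_H$ is faithful iff $q_H$ is a tensor equivalence; and faithfulness of $r_{\kappa, H}$ is precisely what it means for $H$ to be a realisation of $U_{\kappa, \sS}$ along $r_{\kappa, H}$.

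I expect no serious obstacle: the only care needed is to ensure that every equivalence and every quotient construction respects the tensor structure, which is guaranteed by \cite[Prop.\,4.5]{BVK} together with the fact, from Theorem \ref{thm2}, that each $F_H^\kappa$ is an exact strong tensor functor between abelian tensor exact $\Q$-linear categories. The argument is then a word-for-word tensorial upgrade of the proof of Proposition \ref{prop1}.
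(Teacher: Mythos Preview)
Your proposal is correct and matches the paper's approach: the paper gives no explicit proof of Proposition~\ref{prop3}, presenting it instead as the ``tensor $\kappa$-variant'' of Proposition~\ref{prop1}, and your argument is precisely the expected unwinding of that analogy via Serre tensor quotients and \cite[Prop.\,4.5]{BVK}. One minor point worth tightening: when you deduce that $q_H$ is an equivalence from its being essentially surjective and faithful, you are implicitly using that $q_H$ is itself a Serre localisation (by the image of $\Ker F_H^\kappa$ in $\sA_{\kappa,\sS}(\sC)$), so that faithfulness forces its kernel to vanish and hence forces it to be an equivalence; essential surjectivity plus faithfulness alone would not suffice for a general exact functor.
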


Summarizing up, for $H\in \sS$ relative K\"unneth cohomology, from the universal properties of Theorem \ref{thm1}, forgetting the tensor structures in Theorem \ref{thm2},  we get the following commutative diagram in $\Ex$
$$\xymatrix{\sA (\sC)\ar[r]  \ar[d] & \sA_\kappa (\sC)\ar[d]\ar@/^2.6pc/[ddr]^-{F_H^{\kappa}}&\\
\sA_\partial  (\sC)\ar[d]\ar[r] & \sA_{\partial \kappa} (\sC)\ar[d]\ar@/^1.1pc/[dr]^-{F_H^{\partial ,\kappa}}&\\ 
\sA_\partial  (H)\ar@/_1.3pc/[rr]\ar@{^{(}->}[r]^-{\gamma_H}& \sA_{\partial \kappa} (H)\ar@{^{(}->}[r]& \sA }$$
where $\gamma_H$ is also faithful: in some good cases $\gamma_H$ is a tensor equivalence.
\begin{ex} Keep considering $\sC^\square$ and the Grothendieck class $\sS$ from Example \ref{ex1} for $k\hookrightarrow\C$, regarding $H\in \sS$ as a relative K\"unneth cohomology. 
From Nori's basic lemma (\cite[Thm.\,2.5.7]{HMN}) applied to $H$ (= Betti cohomology is sufficient) we get that $\sA_\partial  (H)$ is canonically endowed with a tensor structure making $U_{\partial, H}$ in $\sA_\partial  (H)$ the universal  $\partial\kappa$-enrichment of $H$. Actually, to get the tensor structure we can proceed as follows (see also \cite[Thm.\,2.3.3]{BVP} for a more general statement): we have $\sC^{\rm good}\subset \sC^\square$ the full subcategory of good pairs for which $H$ is cellular and $H^{\rm good}: \sC^{\rm good}\times \N\to \sA$ is then a strong tensor functor, whence $\sA_\partial  (H^{\rm good})$ has a canonical tensor structure by \cite[Prop.\,8.1.5]{HMN} (or \cite[Cor.\, 4.4 \& Thm.\,2.20]{BVHP}) and $\sA_\partial  (H^{\rm good})\cong \sA_\partial  (H)$ is an equivalence by \cite[Thm.\,9.2.22]{HMN}. Thus $\gamma_H$ is a tensor equivalence of abelian tensor categories by the universal property of $\sA_{\partial \kappa} (H)$. Therefore, a theory of (effective) motives with tensor product exists in this case: it coincides with Nori motives. 
\end{ex}

\section{Recapitulation and coda}
In the following, we consider a  class  $\sS$ of K\"unneth cohomologies on the tensor category $\sC$ which is a suitable category of smooth projective varieties. Such an $H\in \sS$ with values in $(\sA, \otimes, \un)\in \Ex^\otimes$ shall be endowed with more structures and properties. First of all, further assume that the cohomology objects $H^i(X)$ are dualisable for all $X\in\sC$, \ie $H$ is taking  values in the full subcategory $\sA_\rig\subseteq \sA$ of dualisable objects. The category $\sA_\rig$ is a rigid abelian tensor $\Q$-linear category (the tensor structure is automatically exact, see \cite[Prop.\,4.1]{BVK}) and we let $\Ex^\rig\subset \Ex^\otimes$ be the full 2-subcategory determined by rigid categories. 

We shall talk about ``twists'' by picking a \emph{Lefschetz object} $L$, \ie an invertible  object of $\sA$, and writing $A(i):= A\otimes L^{\otimes -i}$ for $i\in \Z$ and $A\in\sA$. Since we have a ``dimension function'' on $\sC$ we may assume that there is a \emph{trace isomorphism} for $\dim X=n$, \ie an isomorphism 
 $$\Tr_X:H^{2n}(X)(n)\iso \un$$
 in $\sA$, whenever $X$ is geometrically connected. The dual $H^i(X)^\vee$ shall then be identified with $H^{2n-i}(X)(n)$ via \emph{Poincar\'e duality} as usual (and explained in \cite[Rk.\,4.2.3 b)]{BVKW} in even more generality). For $X, Y\in \sC$ we then have that the $\Q$-vector space of graded morphisms from $H^*(X)$ to $H^*(Y)$ can actually be computed as follows:
$$ \sA^{\N} (H^*(X), H^*(Y))\cong \sA^{\N} (\un , H^*(X)^\vee\otimes H^*(Y)) \cong  $$ $$ \sA^{\N} (\un , H^{2n-*}(X)(n)\otimes H^*(Y))\cong \sA^{\N} (\un , H^{2n-*}(X\times Y)(n))\cong$$ $$ \sA (\un , H^{2n}(X\times Y)(n))$$ 
and elements of this lattter are often called \emph{homological correspondences}. 
An extension of $H^*$ to \emph{algebraic correspondences}, \ie to the $\Q$-linear additive tensor category $\Corr$ given by Chow correspondences (\eg see \cite[\S 2.1.]{JPL}), shall be given by the existence of a $\Q$-linear \emph{cycle class map} (which is functorial, compatible with K\"unneth and $\Tr_X$)
$$ c\ell^i : {\rm CH}^i(X)_\Q \to \sA (\un , H^{2i}(X)(i))$$
from the Chow group of codimension $i$ cycles on $X$. Since $H^*$ is a contravariant functor from $\sC$ to $\sA^{\N}$ we get that any morphism $f:Y\to X$, shall be regarded as an algebraic correspondence via the transpose of its graph $\Gamma_f^{t}\subseteq X\times Y$ and  $H^*(f)$ is $ c\ell^n (\Gamma_f^{t})$. 

Let $\sM_\rat^\eff$ be the pseudo-abelian completion of $\Corr$, and $\sM_\rat:= \sM_\rat^\eff[\L^{-1}]$ for  $\L:= h^2(\P^1_k)$ the effective Lefschetz object in $\sM_\rat^\eff$, if $\P^1_k\in\sC$, \ie  $\sM_\rat$ is the pseudo-abelian rigid tensor category of Chow motives modelled on $\sC$ (\eg see \cite[Def.\,4.1.3]{BVKW} and \cite[\S 3.1.]{JPL}). Considering the contravariant strict tensor functor
$$h : \sC \to \sM_\rat$$ we can see that such additional structures and properties provide a lifting of $H^*$ along $h$ in such a way that
$$\uH^*:\sM_\rat\to \sA^{(\Z)}$$
is a strong graded commutative functor where $\sA^{(\Z)}$ denotes the finitely supported graded symmetric monoidal category. These structures and properties are available for any $H$ in the Grothendieck class.

\subsection*{Weil cohomologies} Fix such a suitable tensor full subcategory $\sC$ of the category of smooth projective varieties over a field $k$ such that $\P^1_k\in \sC$ (following \cite[Def\/.\,4.1.1]{BVKW} for non closed fields $k$ we may assume that $\sC$ is a strongly admissible subcategory). Say that  $(H, \kappa, \upsilon, \Tr, c\ell)$ is a \emph{Weil cohomology} on $\sC$ with values in $\sA$ rigid $\Q$-linear, if it is a K\"unneth cohomology that further satisfies Poincar\'e duality and the usual compatibility axioms which we don't recall here (also see \cite[Def.\,4.2.1]{BVKW} for details). 

For Weil cohomologies we then have $\dag$-decorations as before. For example, $H$ is \emph{normalised} if $H^0(\pi_0 (X))\iso H^0(X)$ and \emph{Albanese invariant} if $H^1(\Alb (X))\iso H^1(X)$ are invertible, see \cite[Def.\,4.3.4 \& 8.2.1]{BVKW}. A key decoration is the \emph{strong Lefschetz} property, \ie the invertibility of the canonical map 
$$L^{i}: H^{n-i}(X)\by{\simeq}H^{n+i}(X)(i)$$
induced by a Lefschetz operator $L$ for $i\le n =\dim X$; also recall that $H$ verifies \emph{weak Lefschetz} if a connected smooth hyperplane section $j : Y \hookrightarrow X$ of $X$
connected yields an isomorphism $j^*: H^i(X)\iso H^i(Y)$ for $i\leq n-2$, see \cite[Def.\,8.3.1]{BVKW}. 

Say that a Weil cohomology is \emph{tight} if it satisfies strong and weak Lefschetz, Albanese invariance and it is normalised as in \cite[Def.\,8.3.4]{BVKW}.

For any such (decorated) Weil cohomology $H$ on $\sC$ with values in $\sA$,  the target $\Q$-linear category $\sA\in \Ex^\rig$ has to be considered jointly with a choice of a Lefschetz object $L\in \sA$, so we let $(\sA, L)\in \Ex^\rig_*$ be the notation for pointed categories. 

The 2-functor of  Weil cohomologies on $\Ex^\rig_*$ is then obtained by pushing forward along $G: \sA \to \sB$ exact strong tensor functors compatibly with Lefschetz objects, \ie together with an isomorphsim $G_*(L_\sA)\cong L_\sB$ of Lefschetz objects. The 2-functors of all Weil and tight Weil cohomologies are 2-representable: this has been proven in \cite[Thm.\,5.2.1, Cor.\,5.2.2, Thm.\,8.4.1 \& Thm.\,A.5.1]{BVKW}, where even an additive graded version is treated; we here provide a simplified statement and proof. 

\begin{thm}\label{thm3}  
For $(\sC, \times, 1)$ as above there exists a universal Weil cohomology $(U_w, \kappa_w, \upsilon_w, \Tr_w, c\ell_w)$ with values in 
$(\sA_w(\sC), \otimes_w, \un_w)$ abelian rigid tensor $\Q$-linear category together with a Lefschetz object $L_w\in \sA_w(\sC)$. 

Moreover, for any $\dag$-decoration we get a universal $\dag$-Weil cohomology $U_w^\dag$ in $\sA_w^\dag(\sC)$, a Serre quotient of $\sA_w(\sC)$; in particular, for $\dag$= tight denote $U_w^+$ with values in $\sA_w^+(\sC)$ the tight universal one.\footnote{In \cite{BVKW} the pairs $(\sA_w(\sC), U_w)$ and $(\sA_w^+(\sC), U_w^+)$ are denoted $(\sW_\ab, W_\ab)$ and $(\sW_\ab^+, W_\ab^+)$ respectively.}
\end{thm}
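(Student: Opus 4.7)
The plan is to extend the construction of Theorem \ref{thm2} by imposing, step by step, the additional structures and axioms of a Weil cohomology via $\Q$-linear tensor localisations and Serre tensor ideal quotients, and then invoking the universal abelian tensor 2-functor $T$ of \cite[Prop.\,5.4]{BVK} to land in an abelian rigid tensor category. As for Theorem \ref{thm2}, everything is carried out in the opposite category so as to obtain a contravariant theory, and all constructions are $\Q$-linear.

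Starting from the universal Künneth cohomology $(\sA_\kappa(\sC), U_\kappa)$, I first pick the tentative Lefschetz object $L_w := U_\kappa^2(\P^1_k)$, available since $\P^1_k\in\sC$, and force its invertibility by a tensor localisation as in \cite[Prop.\,3.5.3]{BVKW}, so that the twists $A(i) := A\otimes L_w^{\otimes -i}$ are well defined for $i\in\Z$. Next, for each geometrically connected $X\in\sC$ of dimension $n$, I formally adjoin a trace morphism $\Tr_{w,X}: U_\kappa^{2n}(X)(n)\to \un_\kappa$ by Levine's adjunction technique \cite[Part II I.2.1.2]{L}, then invert it. Afterwards, I adjoin, functorially in $X$ and $\Q$-linearly, cycle class morphisms $c\ell_w^i(\alpha):\un_\kappa\to U_\kappa^{2i}(X)(i)$ for each $\alpha\in{\rm CH}^i(X)_\Q$, and impose all the compatibility relations with pull-backs, Künneth products and traces prescribed by the Weil axioms of \cite[Def.\,4.2.1]{BVKW}; each such axiom translates into the invertibility or vanishing of a canonical morphism, realised either by a further localisation or by killing a Serre tensor ideal. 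Applying the 2-functor $T$ as in the proof of Theorem \ref{thm2} produces an abelian tensor $\Q$-linear category with exact tensor product, in which, by Poincaré duality, each generator $U_w^i(X)$ admits a dual $U_w^{2n-i}(X)(n)$; hence the full abelian tensor subcategory they generate is rigid (cf.\ \cite[Prop.\,4.1]{BVK}), and this is taken as $\sA_w(\sC)$, equipped with $(U_w,\kappa_w,\upsilon_w,\Tr_w,c\ell_w)$ and the Lefschetz object $L_w$. Universality is checked step by step: any Weil cohomology $H$ on $\sC$ with values in $(\sB,L_\sB)\in\Ex^\rig_*$ factors uniquely (up to isomorphism) through each intermediate construction, the compatibility with Lefschetz objects translating into an isomorphism between the image of $L_w$ and $L_\sB$.

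For the second assertion, any $\dag$-decoration amounts to requiring that a prescribed family of canonical morphisms attached to $U_w$ be invertible, plus some compatibilities. Let $\sI_w^\dag\subseteq\sA_w(\sC)$ be the Serre tensor ideal generated by the kernels and cokernels of these morphisms, and set $\sA_w^\dag(\sC):=\sA_w(\sC)/\sI_w^\dag$; by \cite[Prop.\,4.5]{BVK} the exact rigid tensor structure descends, and the induced $U_w^\dag$ is universal among $\dag$-decorated Weil cohomologies. In the tight case one simultaneously inverts the strong and weak Lefschetz morphisms, the Albanese invariance map and the normalisation morphism as in \cite[Def.\,8.3.4]{BVKW}, yielding $(\sA_w^+(\sC),U_w^+)$. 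The main obstacle is the careful formulation of the cycle class map and of the resulting algebraic correspondence action, so that after all these quotients no generator is accidentally killed and the Poincaré duality pairings genuinely exhibit the required duals; this bookkeeping is precisely the content of \cite[Thm.\,5.2.1 and Thm.\,6.4.1]{BVKW}, of which the present proof is the contravariant counterpart.
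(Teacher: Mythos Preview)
Your proposal is viable in outline but follows a considerably more laborious route than the paper. The paper's key move is to observe at the outset (via \cite[Prop.\,4.4.1]{BVKW}) that a Weil cohomology $(H,\kappa,\upsilon,\Tr,c\ell)$ with values in $(\sA,L)$ is \emph{equivalent} to a strong tensor functor $\uH^*:\sM_\rat\to\sA^{(\Z)}$ from Chow motives subject to three simple conditions on $\uH^*(\L)$, $\uH^*(\sM_\rat^\eff)$ and $\uH^0(h(X))$. Thus traces, cycle classes, Poincar\'e duality and all their compatibilities are already packaged into the source category $\sM_\rat^\eff$, and the construction reduces to running the K\"unneth machinery of Theorem~\ref{thm2} on $\sM_\rat^\eff\times\N$ rather than $\sC\times\N$, localising at a short list of morphisms, applying $T$, and tensor-inverting $L_w=U_w^\eff(\P^1_k,2)$. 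By contrast, you start from $\sA_\kappa(\sC)$ and must then adjoin trace morphisms and an infinite family of cycle class morphisms (one for every element of every Chow group) and impose the entire list of Weil compatibilities by hand; in effect you are rebuilding $\sM_\rat$ inside the target instead of using it as the source. This makes the step-by-step verification of universality, and of the interaction between Levine-type adjunctions and the already-abelian structure of $\sA_\kappa(\sC)$, substantially more delicate than you indicate. The paper's rigidity argument is also cleaner: once $U_w$ lands in $\sA_w(\sC)_\rig$, universality itself forces $\sA_w(\sC)_\rig=\sA_w(\sC)$, whereas you restrict to a subcategory and must then re-establish universality there. For the $\dag$-decorated and tight cases your argument agrees with the paper's.
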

\begin{proof}
First observe that a Weil cohomology $(H, \kappa, \upsilon, \Tr, c\ell)$ with values in $(\sA, L)\in \Ex^\rig_*$ is equivalent to the functor  $\uH^*$ as already noted, jointly with an isomorphism $\Tr: H^2(\P^1_k)= \uH^2(\L)\iso L= \un (-1)$ such that (i) $\uH^*(\L)$ is concentrated in degree $2$, (ii) $\uH^*(\sM_\rat^\eff) \subset \sA^\N$, and
(iii) if $X$ is geometrically connected then $\un=\uH^0(1)\iso\uH^0(h(X))$ is an isomorphism, here $1= h(\Spec k)$.
This is essentially well known and proven in \cite[Prop.\,4.4.1]{BVKW} in this generality. Applying the arguments in the proof of Theorem \ref{thm2} to $\sM_\rat^\eff \times \N$ we obtain the relative additive completion of the universal external product $(\sM_\rat^\eff \times \N)^{\kappa, \add}$ and the corresponding further localisation $\sK_w(\sM_\rat^\eff )$ of $\sK (\sM_\rat^\eff )$ at $L^\add(\P^1_k,i)$ for $i\neq 0, 2$ and $L^\add(1 ,0)\allowbreak\to L^\add(h(X),0)$ for $X$ geometrically connected; thus, applying $T$, we obtain $U_w^\eff$ with values in 
$\sA_w^\eff(\sC):= T(\sK_w(\sM_\rat^\eff ))$. Finally, tensor invert $L_w:= U_w^\eff(\P^1_k, 2)$ getting $U_w$ and
$\sA_w(\sC):=\sA_w^\eff(\sC)[L_w^{-1}]$. We thus have constructed $U_w^*:\sM_\rat \to \sA_w(\sC)^{(\Z)}$ such that any Weil cohomology  $\uH^*$ lifts uniquely to an exact tensor functor  $F_H^w :\sA_w(\sC)\to \sA$ compatibly with Lefschetz objects. We are left with checking that $\sA_w(\sC)$ is rigid. Since $U_w$ is taking values in $\sA_w(\sC)_\rig$ and this latter is an abelian tensor full subcategory of $\sA_w(\sC)$
we then should have $\sA_w(\sC)_\rig=\sA_w(\sC)$ by universality. 

Imposing, strong and weak Lefschetz, Albanese invariance and normalisation shall provide $\sA_w^\dag(\sC)$ as a Serre tensor quotient of $\sA_w(\sC)$ and $U_w^\dag$ as the push-forward of $U_w$.
\end{proof}

For $H\in \sS$ with values in $\sA$, associated with $F_H^w :\sA_w(\sC)\to \sA$, the exact strong tensor functor induced by Theorem \ref{thm3}, we get the $\Q$-linear abelian rigid tensor category
$$\sA_{w}(H) := \sA_w(\sC)/\Ker F_H^w$$ 
togheter with a Lefschetz object $L_{w, H}$ induced by the projection. This is the \emph{universal $w$-enrichment} of a Weil cohomology $H$.\footnote{The category $\sA_{w}(H)$ is denoted $\weil_H^\ab$ in \cite[Thm.\,6.1.7]{BVKW} where we also call it ab-initial enrichment.} Note that this is automatically tight if $H$ is tight and for the universal tight cohomology  $H= U^+_w$  we get $\sA_{w}(U^+_w)= \sA_w^+(\sC)$. 
\begin{ex} \label{ex4}
Let $\A_K^\flat$ be the absolutely flat completion of the $K$-algebra of abstract $p$-adic periods $\A_K$ introduced by Ayoub \cite[\S 1.3]{AW} for a valued field $K$ of unequal characteristics $(p, 0)$ and  residual field $k$. 
Let $$H^*_{\naf}(X) := H^*(\Gamma_{\new}(X/K)\overset{L}{\otimes}_{\A_K} \A^\flat_K)$$ be the restriction to $X\in \sC$ (smooth projective  varieties) of Ayoub's cohomology theory in \cite[Thm.\,1.5]{AW}. This $H_{\naf}$ is a Weil cohomology with values in the category $\A^\flat_K-{\rm mod}$  of  finitely generated projective $\A^\flat_K$-modules: the key facts here are that $H^*_{\naf}$ becomes a K\"unneth cohomology and homotopy invariance implies that $H^*_{\naf}$ is a strong tensor functor on $\sM_\rat$. We obtain $F_{H_\naf}(U_w)= H_{\naf}$ for an exact tensor functor $F_{H_\naf}^w :\sA_w(\sC)\to \A^\flat_K-{\rm mod}$. The universal $w$-enrichment of $H_\naf$ with values in $\sA_{w}(H_\naf)$ provides an a priori smaller algebra of periods.  Ayoub's conjecture \cite[Conj.\,3.20]{AW} is that $\A_K$ is a domain. 
\end{ex}
We can say that two Weil cohomologies $H$ and $H'$ are \emph{$w$-equivalent} if $\sA_{w}(H)$ and $\sA_{w}(H')$ are tensor equivalent compatibly with their Lefschetz objects. We also have a corresponding notion of \emph{$w$-comparable} (see \cite[Def.\,6.6.1 b)]{BVKW} for details).  It is easy to see that for $H$ Weil the canonical strong tensor functor $\sA_{\kappa}(H)\to\sA_{w}(H)$ is inducung a tensor equivalence 
$$\sA_{\kappa}(H)[L_{\kappa,H}^{-1}]\cong \sA_{w}(H)$$
where $L_{\kappa, H}$ is the Lefschetz object induced by $U_\kappa^{2}(\P^1_k)$ on $\sA_{\kappa}(H)$ after Theorem \ref{thm2} applied to $\sM_\rat^\eff $. 
\subsection*{Pure motives} Let $\sS$ be now a class of Weil cohomologies on $\sC$. The Grothendieck class is a class of tight Weil cohomologies,  nowdays called \emph{classical} (see \cite[Ex.\,1.2.14]{JPL} or \cite[Def.\,4.3.2]{BVKW}). Let $\sI_{\sS}^w \subset \sA_w(\sC)$ be the Serre tensor ideal $$\sI_{\sS}^w:= \bigcap_{H\in\sS} \Ker F_H^w \ \
\text{and}\ \ \sA_{w,\sS}(\sC) := \sA_w(\sC)/\sI_{\sS}^w$$ 
 the abelian rigid tensor category given by the Serre quotient along with the Weil cohomology theory $U_{w, \sS}$ push-forward of $U_w$. There is a further canonical  quotient 
$\sA_{w,\sS}(\sC)\to \sA_{w}(H)$ for all $H\in \sS$ making $U_{w, \sS}$ universal with respect to the class $\sS$. 
Let $r_{w,H} : \sA_{w,\sS}(\sC)\to \sA$ be the induced exact tensor functor.  
\begin{nota}\label{nota} Denote $Z(\sA):=\End_\sA (\un)$.\footnote{Recall key facts which are valid  for any abelian rigid tensor category $(\sA, \otimes, \un)$: we have that $Z(\sA)$ is absolutely flat, if it is a domain is a field, and, in this case, exact tensor functors $G:\sA\to \sB$ in $\Ex^\rig$ are faithful, see \cite[Lemma 2.3.1]{BVKW} for references and more details.}  Let $Z_\sS$  denote the $\Q$-algebra $\End_{\sA_{w,\sS}} (\un)$ for short. Let $Z$ (\resp $Z_+$) be $Z_\sS$ for $\sS$ the class of all Weil cohomologies (\resp  tight Weil cohomologies).
\end{nota}
Actually, we easily obtain the following analogue of Propositions \ref{prop1} - \ref{prop3}  (see \cite[Thm.\,6.6.3]{BVKW}). 
\begin{thm}\label{thm4}  
For $\sS$ containing the Grothendieck class, the following conditions are equivalent:
\begin{thlist}
\item $\sA_{w,\sS}(\sC)$ is connected, \ie $Z_\sS$ is a domain
\item  all $H\in \sS$ are realisations of $U_{w,\sS}$ along $r_{w, H}$ 
\item  all $H\in\sS$ are $w$-equivalent
\item if $H\in\sS$ then $\sA_{w,\sS}(\sC)\iso \sA_{w}(H)$ is a tensor equivalence
\item $\sA_{w,\sS}(\sC)$ is Tannakian.\footnote{In the sense of \cite{DT}.}
\end{thlist}
\end{thm}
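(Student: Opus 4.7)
The plan is to run the cycle (i)$\Rightarrow$(ii)$\Rightarrow$(iv)$\Rightarrow$(i), establish (iii)$\iff$(iv) as a formal observation, and treat (i)$\iff$(v) via Deligne's Tannakian criterion. The central tool throughout is the footnote recalled at Notation~\ref{nota}: for an abelian rigid tensor $\Q$-linear category $\sA$, $Z(\sA)$ is automatically absolutely flat; if it is a domain it is in fact a field, in which case every exact tensor functor out of $\sA$ in $\Ex^\rig$ is faithful.

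For (i)$\Rightarrow$(ii): under (i), $Z_\sS$ is a field, so each $r_{w,H}\colon \sA_{w,\sS}(\sC)\to \sA$ is faithful, and hence $H=(r_{w,H})_*U_{w,\sS}$ exhibits $H$ as a realisation of $U_{w,\sS}$. For (ii)$\Rightarrow$(iv): $r_{w,H}$ factors canonically as $\sA_{w,\sS}(\sC)\twoheadrightarrow \sA_w(H)\hookrightarrow \sA$, with the first arrow a Serre tensor quotient (hence essentially surjective and full) and the second faithful by construction; faithfulness of the composite $r_{w,H}$ forces the first arrow to be faithful, therefore an equivalence, and the Lefschetz objects match because $L_{w,\sS}$ and $L_{w,H}$ both descend from $L_w\in \sA_w(\sC)$. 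For (iv)$\Rightarrow$(i): pick a classical $H_0\in \sS$ (available by the Grothendieck-class hypothesis), for which $\sA_w(H_0)$ embeds faithfully in finite-dimensional $\Q_\ell$-vector spaces; then $Z(\sA_w(H_0))\hookrightarrow \Q_\ell$ is a domain, and (iv) transfers this to $Z_\sS$.

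The equivalence (iii)$\iff$(iv) is formal once one reads $w$-equivalence, in line with the absolute case earlier in the paper, as tensor equivalence compatible with the canonical projections from $\sA_w(\sC)$: this amounts to $\Ker F_H^w = \Ker F_{H'}^w$ for all $H, H'\in \sS$, and then the common kernel coincides with its intersection $\sI_\sS^w$, giving (iv); conversely (iv) makes each $\Ker F_H^w$ equal to $\sI_\sS^w$, so all are equal. For (v)$\Rightarrow$(i): Tannakian in Deligne's sense forces $Z_\sS$ to be a field. For (i)$\Rightarrow$(v): apply (i)$\Rightarrow$(iv) to a classical $H_0\in \sS$; the resulting composite $\sA_{w,\sS}(\sC)\iso \sA_w(H_0)\hookrightarrow \Q_\ell\text{-}\Vec$ is a faithful exact tensor $\Q$-linear functor preserving the Lefschetz object, \ie a fiber functor over $\Q_\ell$, whereupon Deligne's Tannakian theorem identifies $\sA_{w,\sS}(\sC)$ as Tannakian.

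The main technical hinge is the step (iv)$\Rightarrow$(i), and its reuse inside (i)$\Rightarrow$(v): both depend on the existence of a classical $H_0\in \sS$ whose target category is already a rigid tensor category over a field, which is the seed from which the domain property of $Z_\sS$ and the fiber functor for the Tannakian conclusion are extracted. Without the hypothesis that $\sS$ contains the Grothendieck class, neither $Z_\sS$ could be shown to be a domain from the other conditions, nor a fiber functor could be produced, so this is where the argument genuinely uses the input from \emph{classical} cohomology theories.
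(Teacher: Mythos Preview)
Your argument is correct and supplies precisely the details the paper omits: the paper offers no proof of Theorem~\ref{thm4} beyond declaring it ``the following analogue of Propositions~\ref{prop1}--\ref{prop3}'' and pointing to \cite[Thm.\,6.6.3]{BVKW}. Your cycle (i)$\Rightarrow$(ii)$\Rightarrow$(iv)$\Rightarrow$(i), hinged on the faithfulness criterion recorded in the footnote at Notation~\ref{nota}, together with the production of a fibre functor from a classical $H_0\in\sS$ for (i)$\Rightarrow$(v), is exactly the mechanism the surrounding text sets up and later exploits (see Remark~\ref{rk1}, where $r_{w,H}$ for classical $H$ is named as the fibre functor). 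Your closing paragraph also correctly isolates the sole place where the hypothesis that $\sS$ contains the Grothendieck class does real work.

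One small remark on (iii)$\Leftrightarrow$(iv): you read $w$-equivalence as $\Ker F_H^w=\Ker F_{H'}^w$, parallel to the absolute case where the paper writes ``there is an equivalence $\sA(H)\cong\sA(H')$, \ie $\Ker F_H=\Ker F_{H'}$''. The paper's literal definition of $w$-equivalent is nominally weaker (an abstract tensor equivalence of $\sA_w(H)$ with $\sA_w(H')$ compatible with Lefschetz objects, without the ``\ie'' clause). Since the paper explicitly invokes the analogy with Propositions~\ref{prop1}--\ref{prop3}, your reading is the intended one, and you are right to flag the reinterpretation; under the purely abstract reading the implication (iii)$\Rightarrow$(iv) would not be as immediate, so stating that the equivalence is taken to be the canonical one is the honest move.
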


Say that a \emph{theory of pure motives} exists for $\sS$ and $\sC$ if 
$\sA_{w,\sS}(\sC)$ is connected.
\begin{ex}\label{ex3} Let $\sS$ be exactly the Grothendieck class (= classical Weil cohomologies). For any field $k$ and $H\in\sS$, we have that $\sA_{w}(H)$ is Andr\'e's category whenever this latter is abelian, see \cite[Thm.\,9.3.3]{BVKW}. We have a canonical faithful exact tensor functor from $\sA_{w}(H)$ to Andr\'e's category, if it is abelian, which is also full, thus essentially surjective, see also \cite[Prop.\,10.2.1]{HMN} and \cite[Thm.\,6.5.1]{BVKW}.

If $k\hookrightarrow\C$ then Andr\'e's category is abelian semisimple (see \cite[Thm.\,0.4]{A}) and $\sA_{w,\sS}(\sC)\iso\sA_{w}(H)$, for any such $H$ in the Grothendieck class since all $H\in \sS$ are $w$-comparable. Therefore $Z_\sS =\Q$ and $\sA_{w,\sS}(\sC)$ is connected (Notation \ref{nota} and Theorem \ref{thm4}), it coincides with Andr\'e motives, \ie with $\sA_{w}(H)$ which is actually independent of $H$, and a theory of pure motives exists in this case: it coincides with Andr\'e motives.

However, a theory of pure motives for the Grothendieck class is missing in positive characteristics.
\end{ex}

\subsection*{Homological equivalence}
For any Weil cohomology $H$ regarded as a tensor functor  $\uH^*:\sM_\rat\to \sA^{(\Z)}$  there is an adequate \emph{$H$-homological equivalence} relation $\sim_H$ corresponding to the tensor ideal $\Ker \uH^*$. Let $\sM_H$ be the category $\sM_\rat$ modulo $\sim_H$, \ie the pseudo-abelian completion of $\sM_\rat/\Ker \uH^*$, as usual. We have  $$h_H: \sC \to \sM_H$$ induced by composition with $h$.  
Composing the induced faithful  functor  $\uH^*: \sM_H\hookrightarrow \sA^{(\Z)}$  with the direct sum functor, we get a refinement 
$$\uH:\sM_H\hookrightarrow \sA_w(H)$$
where $\uH (X):= \uH (h_H(X))= \oplus_i H^i(X)$ regarded in $\sA$ and  $\uH$ is faithful monoidal but not symmetric. The K\"unneth projectors
$$\pi^i :\uH (X) \twoheadrightarrow H^i(X) \rightarrowtail  \uH (X) $$
are given as usual, by the composition of the canonical projection and the canonical inclusion. Say that \emph{$\pi^i$ is algebraic} if it is given by an algebraic correspondence, \ie if it is in the image of $\uH$.

If $H$ is tight then the inverse of the Lefschetz operator $L^i$ induces, for $i\leq n = \dim X$, an homological correspondence as follows: denoting $\Lambda^{i}$ the $(-i)$-twisted inverse $(L^i)^{-1}$ we get an element of 
$$\sA (H^{n+i}(X), H^{n-i}(X)(-i))\cong \sA (\un ,H^{n+i}(X)^\vee\otimes H^{n-i}(X)(-i))\cong  $$ $$\sA (\un ,H^{n-i}(X)(n)\otimes H^{n-i}(X)(-i))\subset \sA (\un ,H^{2(n-i)}(X\times X)(n-i))$$
this latter being a direct summand of the former.  Say that \emph{$\Lambda^i$ is algebraic} if it is given by an algebraic correspondence, \ie by an algebraic cycle in ${\rm CH}^{n-i}(X\times X)$ via the cycle class map $c\ell^{n-i}$. Note that the $i$-twisted $L^{n-2i}$ also induces a commutative square
$$\xymatrix{
\sA (\un, H^{2i}(X)(i))\ar[r]^-{L^{n-2i}(i)}_-\sim & \sA(\un, H^{2(n-i)}(X)(n-i))\\ 
A_H^{i}  (X)\ar@{^{(}->}[u]\ar@{^{(}->}[r]& A_H^{n-i}  (X)\ar@{^{(}->}[u] }$$
where $A_H^{j}  (X)$ is the image of the cycle class map $c\ell^j$ for $j\leq n$. If $\Lambda^{n-2i}= c\ell^{2i}(\gamma)$ is algebraic, for a cycle $\gamma\in {\rm CH}^{2i}(X\times X)$, its action $\gamma_* : {\rm CH}^{n-i}(X)\to {\rm CH}^{i}(X)$  induces an isomorphism $A_H^{n-i}  (X)\iso A_H^{i}  (X)$.

Moreover, for the universal Weil cohomology $U_{w, \sS}$ relative to the class $\sS$, denote 
$$\sM_\sS:= \sM_{U_{w, \sS}}$$ given by an homological equivalence $\sim_\sS: = \sim_{U_{w, \sS}}$ finer than all such $H$-homological equivalences for $H\in\sS$. We obtain the following.
\begin{prop} \label{prop4}
For any $H\in\sS$ we have $\Q$-linear tensor functors
$$\sM_\tnil\to\sM_\sS\to \sM_H\to \sM_\num$$
where $\sM_\num$ and $\sM_\tnil$ are given by numerical equivalence and smash nilpotence equivalence (see \cite{J} and \cite{V} respectively). 
\end{prop}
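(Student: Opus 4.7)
My plan is to realise each of the four categories as a pseudo-abelian quotient of $\sM_\rat$ by a tensor ideal of morphisms, and to produce the three $\Q$-linear tensor functors by verifying the corresponding chain of ideal inclusions. Writing $J_*$ for the tensor ideal of morphisms collapsed to zero in $\sM_*$, it suffices to establish
\[ J_\tnil \;\subseteq\; J_\sS \;\subseteq\; J_H \;\subseteq\; J_\num, \]
after which the universal property of the pseudo-abelian quotient by a tensor ideal supplies the three tensor functors automatically.

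First I would dispatch the two easier inclusions. For $J_\sS \subseteq J_H$, by construction of $U_{w,\sS}$ (see the paragraph preceding Theorem \ref{thm4}) there is a factorisation $F_H^w = r_{w,H}\circ\pi$, with $\pi:\sA_w(\sC) \twoheadrightarrow \sA_{w,\sS}(\sC)$ the canonical projection, so $\uH^*_H = r_{w,H}\circ \uH^*_{U_{w,\sS}}$ as tensor functors out of $\sM_\rat$ and $\Ker \uH^*_{U_{w,\sS}} \subseteq \Ker \uH^*_H$ as tensor ideals. For $J_H \subseteq J_\num$, I would invoke compatibility of the cycle class map with intersection product and the trace isomorphism: if $c\ell_H(\gamma)=0$ then for any $\delta$ of complementary codimension one gets $c\ell_H(\gamma\cdot\delta) = c\ell_H(\gamma)\smile c\ell_H(\delta) = 0$, hence $\deg(\gamma\cdot\delta) = \Tr_X(c\ell_H(\gamma\cdot\delta)) = 0$, so $\gamma$ is numerically equivalent to zero.

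The main obstacle is the leftmost inclusion $J_\tnil \subseteq J_\sS$. Given a smash-nilpotent cycle $\gamma \in \mathrm{CH}^i(X)_\Q$ with $\gamma^{\times N}\sim_\rat 0$, the naturality of the universal cycle class map together with the Künneth isomorphism in $\sA_{w,\sS}(\sC)$ yields
\[ c\ell_{U_{w,\sS}}(\gamma)^{\otimes N} \;=\; c\ell_{U_{w,\sS}}(\gamma^{\times N}) \;=\; 0 . \]
To deduce $c\ell_{U_{w,\sS}}(\gamma)=0$ I would apply the categorical nilpotence principle: in a $\Q$-linear rigid abelian tensor category, a morphism $f:\un\to V$ with $f^{\otimes N}=0$ is itself zero (the abstract form of Voevodsky's observation, cf.\ \cite[Cor.\,8.6.6]{BVKW}). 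The crucial point is that this principle applies to $\sA_{w,\sS}(\sC)$, which in turn requires $\sA_{w,\sS}(\sC)$ to be a genuine $\Q$-linear rigid abelian tensor category; this is ensured by the rigidity of $\sA_w(\sC)$ from Theorem \ref{thm3} together with the fact that its Serre tensor quotient remains rigid (cf.\ \cite[Prop.\,4.5]{BVK}).

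Once these three inclusions are in place, extending the vanishing from cycle classes to arbitrary algebraic correspondences is routine: morphisms in $\sM_\rat$ are computed from Chow groups on products via the cycle class map, and all of Künneth, Poincaré duality and the trace are tensor-compatible, so the ideals $J_*$ are tensor ideals in $\sM_\rat$ and the induced functors are $\Q$-linear tensor functors.
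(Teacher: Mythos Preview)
Your proposal is correct and follows essentially the same approach as the paper's proof: the paper's three-clause argument (``any Weil cohomology factors through smash nilpotence equivalence, any $H\in\sS$ is a push-forward of $U_{w,\sS}$, and numerical equivalence is coarser than any adequate equivalence relation'') is exactly your chain $J_\tnil\subseteq J_\sS\subseteq J_H\subseteq J_\num$, and you have simply unpacked each step, most notably supplying the rigid-abelian argument behind the first clause (which the paper takes as known). The reference \cite[Cor.\,8.6.6]{BVKW} is slightly tangential---that corollary is about the nilpotence conjecture implying the standard conjectures---but the categorical principle you invoke (a $\otimes$-nilpotent morphism out of $\un$ vanishes in a rigid abelian $\otimes$-category, via the image factorisation and the zigzag identities) is valid and is what underlies the paper's assertion.
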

\begin{proof}
Any Weil cohomology factors through smash nilpotence equivalence, any $H\in \sS$ is a push-forward of $U_{w, \sS}$ and numerical equivalence is coarser than any adequate equivalence relation (\cf \cite[Lemma 1.2.18]{JPL}). 
\end{proof}
By Proposition \ref{prop4} and Theorem \ref{thm4}, we get:
\begin{cor}\label{cor1}
A theory of pure motives for the class $\sS$ provides a unique homological equivalence $\sim_\sS=\sim_H$ for all $H\in\sS$. Voevodsky's nilpotence conjecture implies that $\sim_\tnil=\sim_\sS=\sim_H=\sim_\num$ is homological equivalence.
\end{cor}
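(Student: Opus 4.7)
The plan is to chase universal properties through the chain of Proposition~\ref{prop4}, invoking Theorem~\ref{thm4} for the first assertion and Voevodsky's nilpotence conjecture for the second. For the first, suppose a theory of pure motives exists, \ie $\sA_{w,\sS}(\sC)$ is connected. By Theorem~\ref{thm4} (i)$\Leftrightarrow$(ii), every $H\in\sS$ with values in $(\sA,\otimes,\un)$ is a realisation of $U_{w,\sS}$, \ie the push-forward along a \emph{faithful} exact tensor functor $r_{w,H}:\sA_{w,\sS}(\sC)\to \sA$; the faithfulness is the content of the footnote to Notation~\ref{nota}, valid because $Z_\sS$ is a field. Passing to finitely supported graded completions, $r_{w,H}^{(\Z)}$ remains faithful componentwise, and naturality of the extension of Weil cohomologies to Chow motives gives a factorisation
$$\uH^*\ =\ r_{w,H}^{(\Z)}\circ \underline{U}_{w,\sS}^*\colon\ \sM_\rat\to \sA^{(\Z)}.$$
A morphism of Chow motives is then killed by $\uH^*$ if and only if it is killed by $\underline{U}_{w,\sS}^*$, so $\Ker\uH^*=\Ker\underline{U}_{w,\sS}^*$ as tensor ideals in $\sM_\rat$, \ie $\sim_H=\sim_\sS$ for every $H\in\sS$.

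For the second assertion, Proposition~\ref{prop4} provides $\Q$-linear tensor quotient functors
$$\sM_\tnil\to \sM_\sS\to \sM_H\to \sM_\num,$$
exhibiting $\sim_\sS$ and $\sim_H$ as adequate equivalence relations coarser than $\sim_\tnil$ and finer than $\sim_\num$. Voevodsky's nilpotence conjecture equates these two extremes, forcing the entire chain to collapse to a single adequate equivalence: $\sim_\tnil=\sim_\sS=\sim_H=\sim_\num$, which is then the common homological equivalence shared by every Weil cohomology, in particular by every $H\in\sS$.

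The only subtle step is the faithfulness of $r_{w,H}$ used in the first part: it is not automatic from the Serre-quotient construction, but depends on $Z_\sS$ being a field, equivalently, on $\sA_{w,\sS}(\sC)$ being connected. This is exactly where the hypothesis ``a theory of pure motives exists'' enters. The two assertions of the corollary are thus genuinely parallel: the first isolates the consequence of connectedness, the second that of smash nilpotence, and neither is formally required to prove the other.
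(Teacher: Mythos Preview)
Your proof is correct and follows the same route the paper indicates: the paper simply writes ``By Proposition~\ref{prop4} and Theorem~\ref{thm4}, we get'' before stating the corollary, and you have accurately unpacked what that one-line reference means. Your use of (i)$\Rightarrow$(ii) of Theorem~\ref{thm4} (faithfulness of $r_{w,H}$ via the footnote to Notation~\ref{nota}) to obtain $\Ker\uH^*=\Ker\underline{U_{w,\sS}}^*$, and your collapse of the chain in Proposition~\ref{prop4} under the nilpotence conjecture, are exactly the intended arguments; your closing remark that the two assertions are logically independent is also correct and worth making explicit.
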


In general, unconditionally, we get \emph{universal homological equivalence} relations; if $\sS$ is the class of all Weil cohomologies then $U_{w, \sS}= U_{w}\in \sS$ and 
$\sM_{U_w}= \sM_\sS$; if $\sS$ is the class of tight we have that $U_{w, \sS}=U_w^+$ and  $\sM_{U_w^+}=\sM_\sS$:  these coincide with the intersection of all $H$-homological equivalences for $H$ Weil and tight, respectively.\footnote{In \cite{BVKW} we denoted $\sim_\hun$ and  $\sim_\hum$, respectively, these universal homological equivalence relations.}

\subsection*{Standard hypothesis} Grothendieck philosophy of motives is suggesting that $\ell$-adic and $\ell'$-adic cohomology with $\ell\neq \ell'$ should be $w$-equivalent even in positive characteristics. This easily follows from Grothendieck standard conjectures \cite{GS} claiming the algebraicity of $\pi^i$ and $\Lambda^i$ jointly with the equality of $\ell$-adic homological equivalence with numerical equivalence; similarly, one expects that this should holds for any $H$ in the Grothendieck class, at least. 

Specifically, in our context of universal theories, the standard conjectures can be reformulated as follows (and as explained in \cite{BVKW}).  
\begin{thm}\label{thm5} For a Weil cohomology $H$ we have:
\begin{thlist}
\item $\uH:\sM_H\iso \sA_w(H)$ is an equivalence if and only if $\sM_H$ is abelian, $\uH$ is exact and the  $\pi^i$ are algebraic;
\item $\uH:\sM_H\iso \sA_w(H)$ is an equivalence and $\sA_w(H)$ is semi-simple if and only if $H$-homological equivalence is numerical equivalence and the  $\pi^i$ are algebraic. 
\end{thlist}
For $H$ tight we also have:
\begin{thlist}\stepcounter{spec}\stepcounter{spec}
\item $\uH:\sM_H\iso \sA_w(H)$ is an equivalence if and only if $\sM_H$ is abelian, $\uH$ is exact and the $\Lambda^{i}$ are algebraic; 
\item  $\uH:\sM_H\iso \sA_w(H)$ is an equivalence and $\sA_w(H)$ is semi-simple if and only if $H$-homological equivalence is numerical equivalence.
\end{thlist}
\end{thm}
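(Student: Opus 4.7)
The plan is to anchor the proof on part (i); part (ii) follows from (i) via Jannsen's theorem, and parts (iii)--(iv) reduce to (i)--(ii) in the tight case through the standard observation that algebraicity of the $\Lambda^i$ implies algebraicity of the Künneth projectors.

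For part (i), the forward direction is formal. If $\uH: \sM_H \iso \sA_w(H)$ is an equivalence, then $\sM_H$ inherits the abelian structure of $\sA_w(H)$, $\uH$ is automatically exact, and the Künneth projectors $\pi^i$ (which exist in $\sA_w(H)$ by construction) must correspond to endomorphisms of $h_H(X)$ in $\sM_H$, which is the definition of algebraicity. The backward direction is the substantive one. Assuming $\sM_H$ abelian, $\uH$ exact, and the $\pi^i$ algebraic, one obtains a Künneth decomposition $h_H(X) = \bigoplus_i h_H^i(X)$ in $\sM_H$. I would use the universal property of $\sA_w(\sC)$ from Theorem \ref{thm3}: the assignment $X \mapsto h_H^*(X)$ defines a Weil cohomology on $\sC$ with values in $\sM_H$, since the Künneth, trace and cycle class data that make $H$ Weil already live in $\sM_\rat$ and descend through $\sim_H$ to $\sM_H$ once the projectors are present. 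Universality yields an exact tensor functor $\sA_w(\sC) \to \sM_H$ which factors through the Serre quotient $\sA_w(H)$, producing an inverse (up to natural isomorphism) to $\uH$.

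Part (ii) follows by combining (i) with Jannsen's theorem \cite{J}: a category of Chow motives modulo an adequate equivalence is abelian semi-simple iff that equivalence is numerical. So $\sA_w(H)$ semi-simple with $\uH$ an equivalence forces $\sM_H$ abelian semi-simple, hence $\sim_H = \sim_\num$; conversely, $\sim_H = \sim_\num$ gives $\sM_H$ abelian semi-simple by Jannsen, and combined with algebraic $\pi^i$ one invokes (i). For (iii) and (iv), the extra ingredient is the classical observation -- available because $H$ is tight so both strong and weak Lefschetz hold -- that each $\pi^i$ can be written as a universal polynomial in $L$ and $\Lambda$ via the primitive decomposition (compare \cite[Cor.\,8.6.6]{BVKW}). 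Thus algebraicity of all $\Lambda^i$ implies algebraicity of all $\pi^i$, reducing (iii) to (i). For (iv), $\sim_H = \sim_\num$ together with tightness yields algebraicity of $\Lambda^i$ automatically: the hard Lefschetz isomorphism, living in the abelian semi-simple $\sM_H = \sM_\num$, has an inverse which is then represented by an algebraic cycle, after which (iii) applies.

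The principal obstacle is the backward direction of (i): checking that the universal property of $\sA_w(\sC)$ can be applied with target $\sM_H$ once the $\pi^i$ are algebraic. This requires assembling all Weil axioms and their compatibilities (tensor product, trace, cycle class, Poincaré duality) coherently inside $\sM_H$, so that the resulting functor is genuinely an exact Weil-cohomology-valued functor of rigid tensor abelian categories. Once this assembly is verified, parts (ii)--(iv) fall out cleanly from Jannsen and the $L$--$\Lambda$ polynomial identities provided by tightness.
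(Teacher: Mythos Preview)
Your treatment of (i)--(iii) is essentially the paper's: (i) via universality applied to the Künneth-decomposed $h_H$ as a Weil cohomology with values in $\sM_H$, (ii) by Jannsen, and (iii) by the classical fact that the $\pi^i$ are polynomials in $L$ and $\Lambda$.

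There is, however, a genuine gap in your argument for (iv). You write that ``the hard Lefschetz isomorphism, living in the abelian semi-simple $\sM_H = \sM_\num$, has an inverse which is then represented by an algebraic cycle.'' But hard Lefschetz is the assertion that $L^i: H^{n-i}(X)\to H^{n+i}(X)(i)$ is an isomorphism \emph{on a graded piece}. As a correspondence in $\sM_H$, the operator $L$ acts on the whole motive $h_H(X)$ and is nilpotent there (it raises cohomological degree), so it has no inverse in $\sM_H$. To even formulate hard Lefschetz as an isomorphism inside $\sM_H$ you would already need the Künneth components $h_H^i(X)$, i.e.\ the algebraicity of the $\pi^i$---which is precisely what you are trying to deduce. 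So the argument is circular as stated.

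The missing input is Smirnov's theorem \cite{S}: from the semi-simplicity of $\sM_\num$ (Jannsen) together with strong Lefschetz one obtains the algebraicity of the $\Lambda^i$ directly, via a graded-algebra argument that does \emph{not} presuppose the Künneth decomposition. This is exactly how the paper proceeds: once Smirnov gives $\Lambda^i$ algebraic, (iii) applies and (iv) follows. Replace your inversion sketch with an appeal to Smirnov and the proof is complete.
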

\begin{proof} The proof of (i) and (ii) can be extracted from \cite[Thm.\,7.1.6 b) \& 7.2.5 b)]{BVKW} and that of (iii) and (iv) from 
\cite[Thm.\,8.6.10]{BVKW} but for the sake of exposition we explain the key points. If $\uH$ is an equivalence the  properties named in (i) are  clearly verified; conversely,  if $h_H(X)= \oplus h^i(X)$ is the K\"unneth decomposition  corresponding to the projectors $\pi^i$ we get that $h_H$ is a Weil cohomology with values in $\sM_H$ and, by the assumptions, it is a $w$-enrichment of $H$ so that $\uH$ is an equivalence by universality. Notably, $\sM_H $ is abelian semisimple if and only if $H$-homological equivalence is numerical equivalence, by  Jannsen \cite{J}. Thus (ii) follows from (i).

If $H$ is tight and $\uH$ is an equivalence the  properties named in (iii) are  clearly verified; conversely, the assumption implies the algebraicity of the $\pi^i$ so that also (iii) follows from (i). Finally, (iv) follows from 
(iii) since the algebraicty of the $\Lambda^{i}$ is granted by Smirnov \cite{S}. 
\end{proof}
In particular, as a consequence of Proposition \ref{prop4} and Theorem \ref{thm5} we have: 
\begin{cor}\label{cor2}
The standard conjectures for the universal cohomology $U_{w, \sS}$ of a class $\sS$ of Weil cohomologies imply equivalences 
$$\sM_\sS=\sM_H=\sM_\num \iso\sA_{w, \sS}(\sC)\iso \sA_{w}(H)$$ for $H\in \sS$. Voevodsky's nilpotence conjecture implies the standard conjectures for the universal cohomology $U_{w, \sS}$ of any class $\sS$ of tight Weil cohomologies and $ \sM_\tnil=\sM_\sS$ in the chain of the above equivalences. 
\end{cor}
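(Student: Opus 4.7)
The strategy is to apply Theorem \ref{thm5} directly to the universal cohomology $H = U_{w,\sS}$ itself and then transport the conclusion to each $H\in\sS$ via the quotient functor $r_{w,H}$.

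First I would record two tautological identifications: $\sM_{U_{w,\sS}} = \sM_\sS$ by the very definition of $\sim_\sS$, and $\sA_w(U_{w,\sS}) = \sA_{w,\sS}(\sC)$, because $U_{w,\sS}$ already lives in the Serre tensor quotient $\sA_w(\sC)/\sI_\sS^w$, so $\Ker F^w_{U_{w,\sS}}$ is trivial. Granting the standard conjectures for $U_{w,\sS}$ (algebraicity of the K\"unneth projectors plus $\sim_{U_{w,\sS}} = \sim_\num$), Theorem \ref{thm5}(ii) immediately yields an equivalence $\sM_\sS \iso \sA_{w,\sS}(\sC)$ onto a semisimple category, together with $\sM_\sS = \sM_\num$; the chain in Proposition \ref{prop4} then forces $\sM_\sS = \sM_H = \sM_\num$ for each $H\in\sS$. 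To close the remaining equivalence $\sA_w(H)\iso \sA_{w,\sS}(\sC)$, I would observe that algebraicity of the $\pi^i$ pushes forward along the exact tensor functor $r_{w,H}$ to give algebraicity for $H$, while the squeeze $\sim_\sS\subseteq \sim_H\subseteq \sim_\num$ with the endpoints equal forces $\sim_H = \sim_\num$. Hence Theorem \ref{thm5}(ii) applies to $H$ as well, producing $\sM_H \iso \sA_w(H)$; composing with the previous equivalences yields the full chain as stated.

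For the second assertion, assume Voevodsky's nilpotence conjecture $\sim_\tnil = \sim_\num$ and that $\sS$ is a class of tight Weil cohomologies. Proposition \ref{prop4} gives $\sim_\tnil \subseteq \sim_\sS \subseteq \sim_\num$, which now collapses to $\sim_\tnil = \sim_\sS = \sim_\num$, yielding $\sM_\tnil = \sM_\sS$. I would then verify that $U_{w,\sS}$ is itself tight: each defining canonical arrow of tightness (strong and weak Lefschetz, Albanese invariance, normalisation) is an isomorphism after every $F^w_H$ with $H\in\sS$, so its kernel and cokernel lie in $\bigcap_{H\in\sS}\Ker F^w_H = \sI_\sS^w$ and thus vanish in $\sA_{w,\sS}(\sC)$. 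Theorem \ref{thm5}(iv) applied to the tight cohomology $U_{w,\sS}$ with $\sim_{U_{w,\sS}} = \sim_\num$ then gives $\uH:\sM_\sS \iso \sA_{w,\sS}(\sC)$; in particular the K\"unneth projectors of $U_{w,\sS}$ become algebraic, so the standard conjectures hold for $U_{w,\sS}$ and the first part of the corollary applies to complete the chain.

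I expect no serious obstacle: the statement is essentially a bookkeeping corollary of Theorem \ref{thm5} combined with the adequate-relation chain of Proposition \ref{prop4}. The only small point requiring attention is the stability of tightness under passage from the class $\sS$ to $U_{w,\sS}$, which I would phrase in terms of kernel--cokernel stability of the Serre tensor ideal $\sI_\sS^w$.
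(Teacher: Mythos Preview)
Your proposal is correct and follows exactly the route the paper intends: the corollary is stated there simply as ``a consequence of Proposition \ref{prop4} and Theorem \ref{thm5}'', and you have spelled out that deduction accurately, including the key point that $U_{w,\sS}$ inherits tightness from the class $\sS$ via the Serre ideal $\sI_\sS^w$. One cosmetic slip: $\Ker F^w_{U_{w,\sS}}$ is not trivial but equals $\sI_\sS^w$ (the projection $\sA_w(\sC)\to\sA_{w,\sS}(\sC)$ \emph{is} $F^w_{U_{w,\sS}}$), which is precisely why $\sA_w(U_{w,\sS})=\sA_w(\sC)/\sI_\sS^w=\sA_{w,\sS}(\sC)$; your conclusion is right, only the justification needs rephrasing.
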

Recall from  Notation \ref{nota} the absolutely flat ring $Z_+$ given by the endomorphisms of the unit of $\sA_{w}^+(\sC)$. 
\begin{conj}[\protect{Standard Hypothesis}]\label{SH}
The ring $Z_+$ is a domain.
\end{conj}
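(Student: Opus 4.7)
The plan is to invoke Theorem \ref{thm4}: showing that $Z_+$ is a domain is equivalent to asserting that $\sA_w^+(\sC)$ is connected, that every tight Weil cohomology is $w$-equivalent to every other, and that $\sA_w^+(\sC)$ is Tannakian. So the task reduces to producing a common $w$-enrichment through which every tight $H \in \sS$ factors.

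The natural strategy is to deduce the hypothesis from Voevodsky's nilpotence conjecture by means of Corollary \ref{cor2}. Applied to the class $\sS$ of tight Weil cohomologies, nilpotence gives a chain of tensor equivalences
$$\sM_\tnil = \sM_\sS = \sM_H = \sM_\num \iso \sA_{w,\sS}(\sC) \iso \sA_w(H)$$
valid for every tight $H$. Jannsen's theorem makes $\sM_\num$ abelian semisimple, and over a field $k$ one has $\End_{\sM_\num}(\un) = {\rm CH}^0(\Spec k)_\Q = \Q$. Transporting this across $\sA_w^+(\sC) \iso \sM_\num$ gives $Z_+ \cong \Q$, a field and hence a domain, which also delivers the full Tannakian picture of Proposition \ref{prop5}.

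An unconditional attack in characteristic zero would generalise Example \ref{ex3}: fix $k \hookrightarrow \C$, use periods to make the Grothendieck class $w$-comparable, and then try to show that every tight $H$ factors $w$-equivalently through André's semisimple category, forcing the required equivalence. The principal obstacle, which I expect to be the hard part, is that both Voevodsky's nilpotence conjecture and the standard conjectures remain open, while ``tight'' is defined axiomatically rather than via comparability with a fixed classical theory. Thus a class member $H$ could in principle refuse to share a common realisation with any classical cohomology, and without one of these conjectures, or a genuinely new construction of a universal comparison isomorphism over arbitrary base fields, I do not see how to produce the common fiber functor demanded by the equivalent reformulations of the hypothesis.
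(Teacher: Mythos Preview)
The statement is not a theorem but a \emph{Hypothesis} --- the paper's environment here is literally titled ``Hypothesis'' --- and the paper offers no proof of it. Immediately after stating it, the author merely remarks that the nilpotence conjecture or the standard conjectures for $U_{w,\sS}$ would force $Z_\sS=\Q$, hence in particular $Z_+=\Q$ is a domain; this is precisely the conditional reduction you carry out via Corollary~\ref{cor2} and Jannsen's theorem. Your diagnosis of the obstruction is also accurate: ``tight'' is an axiomatic condition, not one defined by comparability with a fixed classical theory, so an arbitrary tight $H$ need not share a common realisation with singular cohomology, and without nilpotence or the standard conjectures no mechanism is known to produce one. If anything you are more cautious here than the paper's own Remark~\ref{rk1}, which asserts $Z_+=\Q$ for $k\hookrightarrow\C$ without spelling out why every tight cohomology must be $w$-equivalent to the classical ones. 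In short, there is no proof in the paper to compare against, and your proposal correctly identifies both the known conditional route and the reason an unconditional argument is presently out of reach.
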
 
Note that from Corollary \ref{cor2}, for any class $\sS$, the standard conjectures for $U_{w, \sS}$ imply that $Z_\sS = Z(\sM_\sS)=\Q$ (Notation \ref{nota}) and a theory of pure motives exists for $\sS$. Therefore, the nilpotent or standard conjectures implies the standard hypothesis with $Z_+=\Q$.

Moreover, Grothendieck pointed out (see \cite{GS} and  Kleiman's review \cite{KS}) that the formalism of Weil cohomologies jointly with the standard conjectures shall be sufficient to provide a natural proof of the Weil conjectures. The following result gives one reason why the standard hypothesis is important, akin to \cite[Cor.\,5-5]{KS}.

\begin{prop}\label{prop5} The standard hypothesis (Hypothesis \ref{SH}) implies that for every self correspondence $\alpha$ of $X\in\sC$ 
\begin{itemize}
\item   the trace $\Tr (\alpha|_{H^i(X)})\in Z_+$ and  the Lefschetz number
$$\sum (-1)^i \Tr (\alpha|_{H^i(X)})\in Z_+$$
are the same for all $H$ tight with values in $\sA$ abelian and rigid; in particular, the dimension of $H^i(X)$ and the Euler characteristic $\chi (X)\in Z_+$ is independent of the choice of a tight Weil cohomology;
 
 \item the characteristic polynomial $P_\alpha (t)$ of $\alpha|_{H^i(X)}$ has coefficients in $Z_+$ independently of $H$ tight with values in $K$-vector spaces.
  \end{itemize}
 \end{prop}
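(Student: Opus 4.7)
The plan is to pull every cohomological quantity back to the universal tight Weil cohomology $U_w^+$ with values in $\sA_w^+(\sC)$, and then exploit the fact that exact tensor functors between rigid abelian tensor categories preserve categorical traces. First I would invoke Hypothesis \ref{SH}: $Z_+$ is a domain and is absolutely flat (footnote to Notation \ref{nota}), so $Z_+$ must be a field. By the same footnote, for any tight Weil cohomology $H$ with values in $(\sA, L) \in \Ex^\rig_*$, the canonical exact tensor functor $r_{w,H}: \sA_w^+(\sC) \to \sA$ provided by Theorem \ref{thm3} is automatically faithful, and hence its restriction to units yields an injective $\Q$-algebra map $Z_+ \hookrightarrow Z(\sA)$.

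Next, for a self-correspondence $\alpha \in \End_{\sM_\rat}(h(X))$, I would compute the categorical trace in the universal setting: each $U_w^{+,i}(X)$ is a dualisable object of the rigid category $\sA_w^+(\sC)$, so the induced endomorphism has a well-defined trace $\Tr(\alpha|_{U_w^{+,i}(X)}) \in Z_+$. Using compatibility of categorical traces under exact tensor functors between rigid categories, one has
$$\Tr(\alpha|_{H^i(X)}) = r_{w,H}\bigl(\Tr(\alpha|_{U_w^{+,i}(X)})\bigr) \in Z(\sA).$$
Through the faithful embedding $Z_+ \hookrightarrow Z(\sA)$ this identifies the trace computed in $\sA$ as the image of the single universal element $\Tr(\alpha|_{U_w^{+,i}(X)}) \in Z_+$, independent of $H$. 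Summing with signs yields the Lefschetz number in $Z_+$, and specialising to $\alpha = \id$ gives both the dimension of $H^i(X)$ and the Euler characteristic $\chi(X) \in Z_+$ independently of the chosen tight $H$.

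For the second bullet, take $\sA$ to be the category of finite-dimensional $K$-vector spaces. The composition powers $\alpha^k$ act on $H^i(X)$ as matrix powers of $\alpha|_{H^i(X)}$, so each $\Tr(\alpha^k|_{H^i(X)})$ lies in $Z_+$ independently of $H$ by the previous step. By Newton's identities the elementary symmetric functions in the eigenvalues, hence the coefficients of $P_\alpha(t)$, are universal $\Q$-polynomials in these power sums and therefore lie in $Z_+$ and agree across all tight $H$. The hard part, and the only genuine use of Hypothesis \ref{SH}, is the faithfulness of $r_{w,H}$ on $Z_+$: without the domain assumption the various $Z(\sA)$ would a priori be incomparable and no common ring would receive all the numerical invariants.
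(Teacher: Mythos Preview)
Your argument is correct and, for the first bullet, coincides with the paper's: both pull everything back to the universal tight theory $U_w^+$, use that $Z_+$ is a field (absolutely flat domain) so that $r_{w,H}$ is faithful and $Z_+\hookrightarrow Z(\sA)$, and then invoke preservation of categorical traces under exact tensor functors.

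For the second bullet the two routes diverge slightly. The paper asserts directly that ``the matrix representing $\alpha$ on $H^i(X)$ has entries in $Z_+$'' and reads off $P_\alpha(t)\in Z_+[t]$; this leans on the faithful inclusion $\End_{Z_+}(U_w^{+,i}(X))\hookrightarrow \End_K(H^i(X))$ but leaves implicit why a basis with $Z_+$-entries exists. You instead observe that each $\Tr(\alpha^k|_{H^i(X)})\in Z_+$ by the first part and recover the coefficients of $P_\alpha(t)$ via Newton's identities, which are $\Q$-polynomials in the power sums and hence land in $Z_+$. Your approach is more self-contained: it uses only the trace compatibility already established and the fact that we are in characteristic zero, and it sidesteps any appeal to a $Z_+$-form of $H^i(X)$. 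The paper's phrasing is terser but requires the reader to unpack what ``entries in $Z_+$'' means in the absence of a neutral fibre functor over $Z_+$.
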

\begin{proof} If $H$ is tight with values in $\sA$ then $Z_+\hookrightarrow Z(\sA)$ (Notation \ref{nota}) and the action of $\alpha$ is in the image of $\End_{Z^+}(U_w^{+,i}(X))\hookrightarrow \End_{Z (\sA)}(H^i(X))$ compatibly with the traces. For $Z (\sA)= K$ the matrix representing the endomorphism $\alpha$ on  $H^i(X)$ has entries in $Z_+$ so that the characteristic polynomial $P_\alpha (t)\in Z_+ [t]$.\end{proof}

\begin{rk}\label{rk1} The standard hypothesis (Hypothesis \ref{SH}) is equivalent to the hypothesis that a theory of pure motives exists for tight Weil cohomologies. Actually, from Theorem \ref{thm4} for the class of tight Weil cohomologies, the standard hypothesis is equivalent to saying that $\sA_{w}^+(\sC)$ is Tannakian and $\sA_{w}^+(\sC) \iso \sA_w(H)$ for any $H$ tight which yields a fiber functor $r_{w, H}$  if $H$ is in the Grothendieck class. Moreover, the external $\Hom$ of $\sA_{w}^+(\sC)$ is finite dimensional over $Z^+$ and every object is of finite length, as for any Tannakian category (see \cite[Prop.\,2.13]{DT}). 

Therefore, under the standard hypothesis, the field $Z_+$ is playing the r\^ole of a ``universal field of coefficients'' since $Z_+$ is a subfield of $Z(\sA)$ for any $H$ tight with values in $\sA$ (see Notation \ref{nota}). In particular,  $\ell$-adic and $\ell’$-adic cohomologies, which are a priori not comparable, become equivalent, $Z_+$ is a subfield of $\Q_\ell$ for every $\ell$ and the field $Z_+$ could  actually be a large trascendental extension of $\Q$ for $k$ with positive characteristic. 

However, if $k\hookrightarrow \C$ then the standard hypothesis is that $Z_+=\Q$ because the universal theory is equivalent to singular cohomology; therefore $\sA_{w}^+(\sC)$ is Andr\'e's category and this latter category is then the universal enrichment of all tight Weil cohomologies.

By the way, for any field $k$,  the hypothesis that $Z$ (Notation \ref{nota}) is a domain is stronger, it implies that all Weil cohomologies are tight and $Z=Z_+$. 
Note that if the Weil cohomology of Example \ref{ex4} is tight then $Z_+$ is a subfield of $\A^\flat_K$ under the pertinent assumptions on $K$ and $k$. 
\end{rk}

\subsection*{Acknowledgement}
I would like to thank Joseph Ayoub, Bruno Kahn and Marc Levine for discussions on matters involving their work. This paper has been conceived after some lecture notes redacted during a visit at the Institute of Mathematical Sciences of Chennai (India) which I also thank for hospitality and support. I finally thank the Italian PRIN 2017 ``Geometric, algebraic and analytic methods in arithmetic'' for support.

\end{document}